\newcommand{\minimize}[2]{\ensuremath{\underset{\substack{{#1}}}%
{\mathrm{minimize}}\;\;#2 }}
\newcommand{\Argmin}[2]{\ensuremath{\underset{\substack{{#1}}}%
{\mathrm{Argmin}}\;#2 }}
\newcommand{\ran}{\ensuremath{\operatorname{ran}}}
\newcommand{\menge}[2]{\big\{{#1}~\big |~{#2}\big\}}
\newcommand{\NN}{\ensuremath{\mathbb N}}
\newcommand{\RR}{\ensuremath{\mathbb R}}
\newcommand{\RPP}{\ensuremath{\left]0,+\infty\right[}}
\newcommand{\sumover}[1]{\sum\limits_{i\in #1}}
\newcommand{\argmin}{\mathop{\rm argmin}}
\newcommand{\sign}{\ensuremath{\operatorname{sign}}}
\newcommand{\dom}{\ensuremath{\operatorname{dom}}}
\newcommand{\RX}{\ensuremath{\,\left]-\infty,+\infty\right]}}
\newcommand{\emp}{\ensuremath{{\varnothing}}}
\newcommand{\pinf}{\ensuremath{+\infty}}
\newcommand{\prox}{\ensuremath{\operatorname{prox}}}
\newcommand{\pto}[2]{\langle #1,#2 \rangle}
\newtheorem{remark}[theorem]{Remark}
\title{ Regularization of $\ell_1$ minimization for dealing with outliers and noise 
in  Statistics and Signal Recovery}
\author{Salvador Flores\thanks{Centro de Modelamiento Matem\'atico (CNRS UMI 2807),
Universidad de Chile. Supported by CONICYT under grant FONDECYT N$^\text{o}$
3120166.} \and
Luis M. Brice\~no-Arias\thanks{Universidad T\'ecnica Federico Santa Mar\'ia--
 Departamento de Matem\'atica. This work was supported by CONICYT
under grants FONDECYT N$^\text{o}$ 3120054 and Anillo ACT1106, by
``Programa de financiamiento basal'' from CMM--Universidad de
Chile, and by Project MathAm Sud N$^\text{o}$ 13MATH01.} 
 }
\begin{document}

\maketitle

\begin{abstract}
We study the robustness properties of $\ell_1$ norm minimization for the classical linear regression problem  
with a given design matrix and contamination restricted to the dependent variable. We perform a fine error analysis 
of the $\ell_1$ estimator for measurements errors consisting of outliers coupled with noise.
We introduce a new estimation technique resulting from a regularization of $\ell_1$ minimization by inf-convolution with the 
$\ell_2$ norm. Concerning robustness to large outliers, the proposed estimator keeps the breakdown point of the $\ell_1$ estimator, 
and reduces to least squares when there are not outliers.
We present a globally convergent forward-backward algorithm for computing our estimator and some numerical experiments confirming its
theoretical properties.
\end{abstract}

\begin{keywords} 
$\ell_1$ norm minimization, robust regression, sparse reconstruction, breakdown point, inf-convolution, forward-backward algorithm.
\end{keywords}

\begin{AMS}
90C31, 62F35, 65K05, 94B35
%90C31: Mathematical programming -- Sensitivity, stability, parametric optimization
%62F35: 	Statistics -- Robustness and adaptive procedures.
%65K05: Numerical Analysis -- Mathematical programming methods.
%94B35: Information and communication-- Decoding.
\end{AMS}

\pagestyle{myheadings}
\thispagestyle{plain}
\markboth{S. FLORES AND L.~M. BRICE\~NO-ARIAS}{Regularization of $\ell_1$ minimization}

\section{Introduction}
In this paper we adress the problem of recovering a vector $f\in \RR^p$ from 
a set of $n$ measurements ($p<n$), 
\begin{equation}\label{e:mainmodel}
y=Xf+\delta, 
\end{equation}
where $y\in\RR^n$ is the vector of measurements or observations, $X$
is an $n\times p$ matrix of full rank, whose rows are realizations of
the explicative variables, and $\delta\in\RR^n$ is an error term. 

In classical linear regression, a vector of observations
or dependent variables $y\in\RR^n$ is given along with the same
number of explicative variables $x_1,...,x_n \in \RR^p$. 
We assume that the random variables $x_1,...,x_n$ and $y$ are related through a linear model, which implies the 
existence of a vector $f\in \RR^p$ such that
\begin{equation}\label{E:linmod}
(\forall i\in \{1,...,n\})\quad y_i = x_{i}^\top f +\delta_i,
\end{equation}
where $(\delta_i)_{1\leq i\leq n}$ are i.i.d. random variables independent of the $x_i$s with zero mean and finite variance. 
The objective in linear regression is to estimate $f$.
The Least Squares Estimator (LSE) of $f$ is defined as the solution to
\begin{equation}\label{P:LS}
\begin{array}{cc}
 \min\limits_{g\in\RR^p,r\in \RR^n} & \sum\limits_{i=1}^n r_i^2 \\ 
s.t\ &  r=y-Xg,
\end{array}
\end{equation}
where $r$ denotes the vector of residuals. Under the common assumption that the  errors $\delta_i$ are gaussian,
the LSE is the best linear unbiased estimator of $f$ \cite{Shao}. However, it
is very sensitive to deviations from normality, even moderated ones.
As the hypothesis of normality is often violated in practice, there is
a great interest in developing statistical procedures that are robust
face to different error distributions. 

In robust regression, model \eqref{E:linmod} is enlarged  by considering 
that errors come from \emph{contaminated distributions} \cite{Tukey60}  
\begin{equation}\label{E:contaminated}
\delta \sim (1-\varepsilon)F_I+\varepsilon F_{II},
\end{equation}
where $F_I$ is a light-tail distribution, usually normal, and
$F_{II}$ is an arbitrary distribution, supposed to model outliers. The
quantity $0<\varepsilon<1$ represents the fraction of contamination.  
The ability of an estimation method to give reasonable results under
model \eqref{E:contaminated} can be measured by the
\emph{Regression Breakdown Point} (RBP), defined  as the maximum
fraction of the components of $\delta$ that
can diverge while keeping the estimator bounded. The LSE 
has an asymptotic RBP of $0\%$, since a single divergent
observation can completely mislead the fit, independently of the
sample size. There exists many robust estimators with the highest possible RBP (see \cite{Rousseeuw-Leroy87,MMY} for a comprehensive 
treatment of the subject),  
but all of them involve solving hard global and/or combinatorial optimization problems. 
The M-estimator \cite{Huber73,Huber81} is the first attempt to obtain robust and efficiently
computable estimators. They are a
generalization of \eqref{P:LS}, defined as a solution to
\begin{equation}\label{P:M-estimators}
\begin{array}{cc}
 \min\limits_{g\in\RR^p,r\in \RR^n} &\sum\limits_{i=1}^n \rho(r_i) \\ 
s.t\ &  r=y-Xg,
\end{array}
\end{equation}
for some differentiable pair function $\rho:\RR\to \RR_+$ which is non-decreasing in $\RR_+$. 
The first order optimality conditions of problem  \eqref{P:M-estimators} yields
\begin{equation}\label{E:M-estimators-weights}
\sum\limits_{i=1}^n w_i x_i=0,
\end{equation}
where $w_i:= \rho'(r_i)$ acts as a weight of the influence of each
observation on the fit. Hence, if the function $\rho$ is additionally convex the observations with large residuals have a higher weight. 
This implies that the M-estimator is sensitive to outliers in this case. 
In the opposite case, if the function $\rho$ has non-increasing derivative, we face a nonconvex optimization problem, which are beyond the 
capabilities of the state-of-the-art of optimization methods, even for problems of modest size.

The border case is the $\ell_1$ estimator, also called Least Absolute Deviations, which is defined as a solution to 
\begin{equation}\label{P:l1}
\begin{array}{cc}
 \min\limits_{g\in\RR^p,r\in \RR^n} & \sum\limits_{i=1}^n  |r_i| \\ 
s.t\ &  r=y-Xg.
\end{array}
\end{equation}
It does not fit in the framework of \eqref{P:M-estimators} since the
function to minimize is not differentiable. Nonetheless, it satisfies
equation \eqref{E:M-estimators-weights} for $w_i$ equal to one if
$r_i>0$, equal to minus one if $r_i<0$, and between $-1$ and $1$ for
null residuals. Therefore, the $\ell_1$ estimator gives a bounded weight to each observation while keeping the 
estimation problem convex.

Despite of the remarkable properties of the $\ell_1$ estimator, it has been difficult to find its place in robust regression. 
In fact, most of the literature on the subject  adopts the  notion of breakdown point of Donoho-Huber
\cite{Donoho-Huber}, that considers the effect of replacing a subset
of \emph{pairs} $(x_i,y_i)$ of observations by arbitrary ones. In
\cite{MBY} the Donoho-Huber breakdown point of the $\ell_1$ estimator
was shown to be $0$, just as for the LSE. This result leaves the impression that 
the $\ell_1$-estimator is not robust at all, at least for random carriers.

The quantitative study of the robustness properties of the
$\ell_1$-estimator for non-random carriers (also called 
\emph{fixed design}), i.e., for a deterministic $X$, start with \cite{He-et-Al90}. 
In this work, the authors introduce a finite-sample measure of performance for
regression estimators based on tail behaviour. For the
$\ell_1$-estimator as well as for a class of M-estimators, 
their tail performance measure turns out to be equal to the RBP and they give a simple characterization of it in terms of the design
configuration. In particular, they show that the RBP of the $\ell_1$ estimator can be positive for non-random carriers.
The same expression for the RBP is obtained by Ellis and Morgenthaler \cite{Ellis-Mor}, who also study its role as a leverage measure.
Interestingly, these characterizations have been recently rediscovered in the context of the theory of compressed sensing, as we shall
see in Section \ref{S:link}.
From an optimization point of view, the same problem is studied by Giloni and Padberg \cite{Giloni-Padberg}, who provide a 
characterization of the RBP by using the concepts of $q$-{\em strength} and $s$-{\em stability} of a matrix, introduced by themselves. 
Additionally, they discuss uniqueness issues and their implications for the RBP.
These results reopen the discussion on the robustness of the $\ell_1$ estimator.

Problem \eqref{e:mainmodel} is reconsidered in \cite{Candes-Tao-Decoding} by signal processing specialists. 
Their work lies in the fixed design framework and they suppose, as in
\cite{He-et-Al90,Ellis-Mor,Giloni-Padberg}, that contamination is restricted to the dependent variable $y$. 
Moreover, they assume 
that the vector $\delta$ in \eqref{e:mainmodel} is \emph{sparse},  i.e., only a small fraction
of the observations is contaminated and the rest is completely free
of errors. This hypothesis, that would horrify any statistician,
permits to solve this problem via the successful theory of sparse
solutions to linear systems. This theory provides sufficient conditions for
\emph{exact} recovery of a signal from corrupted measurements.
The sufficient condition is known as the \emph{restricted isometry property} and it is verified with high probability for random 
normal matrices $X$  when $n$ and $p$ go to infinity in a proper ratio.

Later, in \cite{Candes-Randall}, a modification of $\ell_1$ minimization for linear regression is put forward in order to deal with outliers 
and noise. The sufficient conditions for the noiseless case are adapted to this more realistic context. 
However, their conditions are only sufficient and in the particular instance when $X$ is normal random and has orthonormal columns. 
A thorough study of $\ell_1$ minimization for struggling against noise coupled with outliers in linear regression is missing.

We perform a detailed error analysis of the $\ell_1$ estimator when the errors in \eqref{e:mainmodel} take the form  
$ \delta=z+e$, where $z$ is a noise term and $e$ is a sparse vector. As a consequence, we show that the RBP of the $\ell_1$ estimator 
characterizes the critical sparsity level of $e$ in order to exactly recovering $f$ in \eqref{e:mainmodel} by solving \eqref{P:l1} when $z=0$. 
The general conclusion of this analysis is that $\ell_1$ minimization manages remarkably well the presence of sparse outliers, but
has a poor response to noise. 

We introduce a new robust estimator that inherits the good properties of $\ell_1$ estimation and LSE for dealing simultaneously with 
outliers and noise, for a general matrix $X$. Our estimator is defined by a minimization problem involving the \emph{inf-convolution}
of the $\ell_1$ and $\ell_2$ norms of the residuals. A globally convergent algorithm for computing our estimator is proposed. A fine error analysis 
and numerical experiments corroborates that our estimator actually have a better behavior than LSE and $\ell_1$ estimator in face to noise and 
outliers. Moreover, in the absence of outliers or noise, our estimator reduces to LSE or $\ell_1$ estimator, respectively.

This paper is organized as follows.
In Section~\ref{S:link} we recall the contributions from the theory of sparse recovery to robust linear regression. In Section \ref{S:l1} we 
expound a detailed quantitave error analysis of the $\ell_1$ estimator. 
In Section~\ref{S:main} we introduce a new estimation technique that improves the error estimates of the $\ell_1$-estimator for data with noise. 
 In Section~\ref{S:algorithm} we present a globally convergent algorithm for
computing our estimator whose main advantage is its simplicity, as it is defined by a simple fixed-point iterative process. 
In Section~\ref{S:numerical} we provide numerical results confirming that our estimators
inherits the best of the LSE and $\ell_1$ estimator. We conclude the article with a summary and a discussion, presented in
Section \ref{S:conclusions}.

\subsection{Notation and preliminaries}\label{SS:preliminaries}
We shall use the notation $N=\{1,...,n\}$ for the index set of all the observations. For a set of indexes $M$,
$|M|$ denotes its cardinality. For a vector $x\in\RR^n$, we denote by
$\supp(x)$ its support, \emph{i.e.}, 
the index set of nonzero components, $\supp(x)=\{i\in N\mid x_i\neq 0\}$. The cardinality of the support 
of a vector, often called the ``$\ell_0$-norm'' or ``cardinality norm'', is denoted as $\|x\|_0$; thus 
$$\|x\|_0=|\{i\in N\mid x_i\neq 0\}|.$$ 
For a subset $M$ of $N$ and $p\in\left[1,\pinf\right[$, we define $\|\cdot\|_{p,M}\colon x\mapsto(\sum_{i\in M}|x_i|^p)^{1/p}$ and 
$\|\cdot\|_{\infty,M}\colon x\mapsto\max_{i\in M}|x_i|$. Moreover, for every $x\in\RR^n$ and $p\in\left[1,\pinf\right[$, we denote 
$\|x\|_p=\|x\|_{p,N}$ and $\|x\|_{\infty}=\|x\|_{\infty,N}$.

Let $\phi\colon\RR^n\to\RX$ be a lower semicontinuous convex function
which is proper in the sense that $\dom \phi=\menge{x\in\RR^n}{\phi(x)<\pinf}\neq\emp$.
The subdifferential operator of $\phi$ is
\begin{equation*}
\partial\phi\colon\RR^n\to 2^{\RR^n}\colon
x\mapsto\menge{u\in\RR^n}{(\forall
y\in\RR^n)\:u^\top (y-x)+\phi(x)\leq\phi(y)}
\end{equation*}
and we have  \cite[Theorem 2.2.1]{HU-Lemarechal93(I)}
\begin{equation}\label{E:Fermat}
x\in\Argmin{y\in\RR^n}{\phi(x)}\quad \Leftrightarrow\quad
0\in\partial\phi(x). 
\end{equation}
The proximal mapping associated with $\phi$ is defined by
\begin{equation}\label{E:def-prox}
\prox_{\phi}\colon\RR^n\to\RR^n\colon x\mapsto \argmin_{u\in\RR^n}
\left(
\phi(u) +\frac12\|u-x\|_2^2 \right).
\end{equation}
From \eqref{E:Fermat} we obtain
\begin{equation*}%\label{e:caractprox}
p=\prox_{\phi}x\quad\Leftrightarrow\quad x-p\in\partial\phi(p),
\end{equation*}
and, since $\phi+\|\cdot-x\|^2/2$ is strongly convex, $\prox_{\phi}(x)$ exists and is unique for all
$x\in\RR^n$.

The following lemma will be useful throughout the paper.
\begin{lemma}\label{L:prox}
Let $\gamma\in\RPP$ and let
$\phi\colon\RR^n\to\RR\colon x\mapsto \phi(x)=\gamma\|x\|_1=\gamma\cdot \sum_{i=1}^n|x_i|$. 
Then the following hold.
\begin{enumerate}
\item\label{L:proxi}
For every $x\in\RR^n$,
\begin{equation*}
\partial\phi(x)=\overset{n}{\underset{i=1}{\times}}
\partial\gamma|\cdot|(x_i),
\end{equation*}
where
\begin{equation*}%\label{E:subdiff-l1}
(\forall\xi\in\RR)\quad \partial\gamma|\cdot|(\xi)=
\begin{cases}
\gamma,\quad&\text{if }\:\xi>0;\\
[-\gamma,\gamma],&\text{if }\:\xi=0;\\
-\gamma,&\text{if }\:\xi<0.
\end{cases}
\end{equation*}
\item\label{L:proxii} 
For every $x\in\RR^n$,
 $$\prox_{\gamma\phi}x=(\prox_{\gamma|\cdot|}(x_i))_{1\leq i\leq n},$$ where
\begin{equation*}
(\forall\xi\in\RR)\quad 
\prox_{\gamma|\cdot|}(\xi)=
\begin{cases}
\xi-\gamma&\text{if }\:\xi>\gamma;\\
0,&\text{if }\:\xi\in[-\gamma,\gamma];\\
\xi+\gamma,&\text{if }\:\xi<-\gamma.
\end{cases} 
\end{equation*}
\end{enumerate}
\end{lemma}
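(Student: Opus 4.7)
The plan is to exploit the separable structure of $\phi$ in both parts: since $\phi(x)=\sum_{i=1}^n\gamma|x_i|$ is a sum of functions each depending on a single coordinate, both the subdifferential and the proximal mapping decouple coordinatewise. This reduces everything to the one-dimensional computation of $\partial(\gamma|\cdot|)$ and $\prox_{\gamma|\cdot|}$ on $\RR$.

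For part \ref{L:proxi}, I would first invoke the standard fact that the subdifferential of a separable sum is the Cartesian product of the subdifferentials of the summands; this follows directly from the definition of $\partial$ applied to $\phi$, since an inequality $u^\top(y-x)+\phi(x)\leq\phi(y)$ valid for all $y\in\RR^n$ is equivalent, by choosing $y$ to differ from $x$ in only one coordinate at a time, to the $n$ scalar inequalities $u_i(\eta-x_i)+\gamma|x_i|\leq\gamma|\eta|$ for every $\eta\in\RR$ and $i\in N$. It remains to identify $\partial(\gamma|\cdot|)(\xi)$ on $\RR$: for $\xi\neq 0$ the function $\gamma|\cdot|$ is differentiable with derivative $\gamma\sign(\xi)$, so the subdifferential is the singleton $\{\gamma\sign(\xi)\}$; for $\xi=0$ one checks directly that $u\in\partial(\gamma|\cdot|)(0)$ iff $u\eta\leq\gamma|\eta|$ for every $\eta\in\RR$, which holds iff $|u|\leq\gamma$.

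For part \ref{L:proxii}, I would use the characterization $p=\prox_\phi x \Leftrightarrow x-p\in\partial\phi(p)$ recalled just before the lemma, combined with part \ref{L:proxi}. Because $\partial\phi(p)=\times_{i=1}^n\partial(\gamma|\cdot|)(p_i)$, the inclusion decouples coordinatewise to $x_i-p_i\in\partial(\gamma|\cdot|)(p_i)$, yielding $\prox_{\gamma\phi}x=(\prox_{\gamma|\cdot|}(x_i))_{1\leq i\leq n}$. The scalar formula is then obtained by a three-case analysis on the sign of $p=\prox_{\gamma|\cdot|}(\xi)$: if $p>0$ then $\xi-p=\gamma$ forces $p=\xi-\gamma$ which is consistent precisely when $\xi>\gamma$; symmetrically $p<0$ gives $p=\xi+\gamma$ when $\xi<-\gamma$; and if $p=0$ the condition $\xi\in[-\gamma,\gamma]$ is both necessary and sufficient. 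Since the prox is single-valued (by strong convexity of $\phi+\tfrac12\|\cdot-x\|^2$, as noted above), these three mutually exclusive cases exhaust all $\xi\in\RR$ and yield the stated formula.

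There is no real obstacle here; the whole argument is a routine application of the separability principle plus a one-dimensional subdifferential and soft-thresholding computation. The only minor point worth writing carefully is the justification that separability of $\phi$ implies separability of $\partial\phi$, since the paper uses this repeatedly later on.
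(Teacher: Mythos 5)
Your proposal is correct and complete. Note, however, that the paper does not actually write out an argument here: its ``proof'' is a one-line citation to standard results in the literature (separability of the subdifferential and of the proximal mapping, plus the scalar soft-thresholding example). Your version derives the same facts from first principles -- the coordinatewise decoupling of the subdifferential inequality, the scalar computation $\partial(\gamma|\cdot|)(0)=[-\gamma,\gamma]$, and the three-case resolution of $x-p\in\partial\phi(p)$ using single-valuedness of the prox -- which is more self-contained and makes explicit the separability step the paper relies on repeatedly later. The only cosmetic point is the notation $\prox_{\gamma\phi}$ in the statement: with $\phi=\gamma\|\cdot\|_1$ this literally reads as $\prox_{\gamma^2\|\cdot\|_1}$, but the stated scalar formula (threshold $\gamma$) and the paper's later use of $P_{\gamma\sigma}=\prox_{\gamma\sigma\|\cdot\|_1}$ make clear that $\prox_{\gamma\|\cdot\|_1}$ is intended, which is exactly what you computed.
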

\begin{proof}
The results follow from \cite[Lemma~2.1, Lemma~2.9, and
Example~2.16]{Smms05}.\\
\end{proof}
\vskip .5\baselineskip

Recall the unique orthogonal decomposition of $z\in\RR^n$ as 
\begin{equation*}%\label{E:ortho-decomposition} 
z =X\bar{g}+\bar{b},\quad \bar{b}\in\ker(X^\top). 
\end{equation*}
When $X$ has full rank the \emph{hat matrix} 
\begin{equation*}%\label{E:hat-matrix}
H=X(X^\top X)^{-1}X^\top 
\end{equation*}
is well defined and it holds that 
$$
\bar{b}=(I-H)z,\quad \bar{g}=(X^\top X)^{-1}X^\top z,\  \mbox{and}\  X\bar{g}=Hz.
$$

\section{Connections with sparse reconstruction and compressed sensing}\label{S:link}
In \cite{Candes-Tao-Decoding}, the problem of recovering an input $f$ from corrupted measurements 
\begin{equation}\label{E:linmod-sparse}
y=Xf+e,
\end{equation}
when the  error term $e$ is sparse, is considered. The goal was to solve this problem by exploiting 
recent advances in the study of Sparse Reconstruction Problems (SRP), which
consist in finding the sparsest solution to  underdetermined  linear systems. 

If we consider a matrix $F$ such that $\ker(F)=\ran(X)$, then from \eqref{E:linmod-sparse} we obtain $Fy=Fe$.
Let us denote $\tilde y=Fy$, and consider the following SRP
\begin{equation}\label{P:srp}
\begin{array}{lc}
\underset{s\in\RR^n}{\min} & \|s\|_0 \\ s.t & F s=\tilde{y}.
\end{array} 
\end{equation}
Clearly, $e$ is a feasible point for Problem \eqref{P:srp}. 
If it was the unique solution then it would be possible to recover the signal $f$  from $e$ by solving the system
\begin{equation*}
Xf=y-e.
\end{equation*}
This is indeed the case, under very mild assumptions on the sparsity of $e$, as the following Lemma shows.

\begin{lemma}
\label{l:2.1}
Let $\kappa(X)=\max \{|M|: \exists \theta \in \RR^p,\  X\theta \neq 0,\
\mbox{s.t}\ x_i^\top \theta =0\ \forall i\in M\subset\{1,\ldots,n\}
\}$, let $F$ be such that $\ker(F)=\ran(X)$, and set $\tilde{y}=Fy$.\\
 If  $$\|e\|_0\leq (n-\kappa(X)-1)/2,$$ then \eqref{P:srp} has unique solution $e$. 
\end{lemma}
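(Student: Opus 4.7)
The plan is to argue by contradiction, using $\kappa(X)$ to lower-bound the support size of any nonzero vector in $\ran(X)$, combined with the triangle-style inequality for $\|\cdot\|_0$.

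First I would unpack the hypothesis. Since $y=Xf+e$ and $\ker(F)=\ran(X)$, we have $\tilde{y}=Fy=Fe$, so $e$ is feasible for \eqref{P:srp}. If $s$ is any optimal solution, then $\|s\|_0\leq\|e\|_0$. The whole game is to rule out $s\neq e$.

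Second, I would reinterpret $\kappa(X)$. By definition, $\kappa(X)$ is the largest number of coordinates on which some nonzero element of $\ran(X)$ can vanish. Equivalently, every nonzero vector $X\theta\in\ran(X)$ satisfies $\|X\theta\|_0\geq n-\kappa(X)$. Now if $s$ is a solution to \eqref{P:srp} with $s\neq e$, then $F(s-e)=0$, so $s-e\in\ker(F)=\ran(X)$ and $s-e\neq 0$, whence
\begin{equation*}
\|s-e\|_0\geq n-\kappa(X).
\end{equation*}

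Third, I would combine this with the subadditivity $\|s-e\|_0\leq\|s\|_0+\|e\|_0$ (which is immediate from $\supp(s-e)\subseteq\supp(s)\cup\supp(e)$) and optimality $\|s\|_0\leq\|e\|_0$, to get
\begin{equation*}
n-\kappa(X)\;\leq\;\|s-e\|_0\;\leq\;\|s\|_0+\|e\|_0\;\leq\;2\|e\|_0\;\leq\;n-\kappa(X)-1,
\end{equation*}
a contradiction. Hence $s=e$ and uniqueness follows.

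No step looks genuinely hard; the only thing to be careful about is the precise reading of $\kappa(X)$ (maximum number of zero coordinates of a nonzero element of $\ran(X)$, not the minimum number of nonzero ones) so that the lower bound $n-\kappa(X)$ on $\|X\theta\|_0$ is correctly oriented. The strict inequality in the hypothesis $\|e\|_0\leq(n-\kappa(X)-1)/2$, as opposed to $(n-\kappa(X))/2$, is exactly what is needed to convert the chain above into a strict contradiction.
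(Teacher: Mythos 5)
Your proof is correct and follows essentially the same route as the paper's: both use feasibility of $e$ to bound the sparsity of any competitor, subadditivity of $\|\cdot\|_0$ to bound $\|s-e\|_0$ by $n-\kappa(X)-1$, and the fact that $s-e\in\ran(X)\setminus\{0\}$ must have at least $n-\kappa(X)$ nonzero entries to reach a contradiction. Your explicit restatement of $\kappa(X)$ as a lower bound on $\|X\theta\|_0$ is a clean way of packaging the same maximality argument the paper invokes.
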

\begin{proof}
Suppose that there is a vector $d\neq e$ with less than $(n-\kappa(X)-1)/2$ nonzero components such that $Fd=\tilde{y}=Fe$.
This implies that $F(d-e)=0$, i.e., $d-e\in\ker(F)=\ran(X)$. Hence, there exists $\theta\in\RR^p$ such that $d-e=X\theta\neq 0$. 
Since $\|d\|_0\leq (n-\kappa(X)-1)/2$ and $\|e\|_0\leq (n-\kappa(X)-1)/2$, $\|d-e\|_0\leq n-\kappa(X)-1$. 
Therefore, $d-e=X\theta$ has at least $\kappa(X)+1$ null components, which is in contradiction
with the maximality of $\kappa(X)$. 
\end{proof}

Under the conditions provided in Lemma~\ref{l:2.1} the problem of
recovering a signal from very incomplete information can be solved
via an SRP.
 Unfortunately, the problem of finding the sparsest
solution to linear systems is NP-hard (see, e.g., \cite{Candes-Tao-Decoding}). 
Therefore, a common approach consists in replacing  the $\ell_0$-norm by the
$\ell_1$-norm, which results in a convex (linear) optimization problem
that can be efficiently solved. The problem of determining if
this relaxation gives the sparsest solutions have been studied in
\cite{CRT,Candes-Tao-Decoding,Candes-Tao-Universal,Donoho04}
with positive results. In these works, the authors provide sufficient
conditions in order to obtain the sparsest solution via
$\ell_1$ minimization.
In \cite{Candes-Tao-Decoding}, Candes and Tao prove that $e$ actually 
is the unique solution to the convex problem 
\begin{equation}\label{P:sparse-decode}
\begin{array}{cc}
 \min\limits_{s\in\RR^n} & \|s\|_{1} \\ &  F(s-y)=0,
\end{array}
\end{equation}
provided that $F$ satisfies the following \emph{restricted isometry property} (RIP)
\begin{equation*}%\label{E:CS-conditions}
(\exists q\in\{1,\ldots,n\})\quad
\delta_q+\theta_{q,q}+\theta_{q,2q}<1,
\end{equation*}
where
$$
\delta_q=\max_{|J|\leq q,c\in \RR^{|J|}} \left| \frac{\|F_J
c\|^2}{\|c\|^2} -1\right|
\quad\text{and}\quad \theta_{q,q'}=\max_{\substack{|J|\leq q,c\in
\RR^{|J|}\\|J'|\leq q',c'\in \RR^{|J'|}}} \frac{|\pto{F_J
c}{F_{J'}c'}|}{\|c\|\|c'\|}
$$ 
are the \emph{restricted isometry constants} of $F$ and $F_J$ denotes
the sub matrix of the columns of $F$ indexed by $J$. Let us further define 
\begin{equation*}%\label{E:def-mR}
m_R(F)=\max\{q\in N\mid \delta_q+\theta_{q,q}+\theta_{q,2q}<1 \}. 
\end{equation*}
The following result gives a relation between the solution to
\eqref{P:sparse-decode} under condition RIP and the reconstruction of
$f$. 

\begin{theorem}[Candes and Tao \cite{Candes-Tao-Decoding}, Theorem
1.4]\label{Th:Candes-Tao}
Let  $y=Xf+e$ where $f\in \RR^p$ and $e\in\RR^n$,  and let $F$ be a matrix such that $FX=0$.
If $\|e\|_0\leq m_R(F)$, then $f$ is the unique solution to the
problem 
\begin{equation}\label{e:Cantao}
\min_{g\in \RR^p} \|y-Xg\|_{1}.
\end{equation}
\end{theorem}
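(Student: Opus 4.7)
The plan is to reduce the regression problem \eqref{e:Cantao} to the sparse reconstruction problem \eqref{P:sparse-decode} via a change of variables, and then invoke the RIP-based $\ell_1/\ell_0$ equivalence that guarantees $e$ is the unique solution of \eqref{P:sparse-decode}.

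First, I would perform the substitution $s = y - Xg$. As $g$ ranges over $\RR^p$, the residual vector $s$ ranges over the affine subspace $y - \ran(X)$. Minimizing $\|y - Xg\|_1$ over $g \in \RR^p$ is therefore the same as minimizing $\|s\|_1$ over $s \in y - \ran(X)$. Using $y = Xf + e$ and $FX = 0$, we get $Fy = Fe$ and $F(y - Xg) = Fe$ for every $g$, so every such $s$ satisfies $Fs = Fe$. The natural setting is $\ker(F) = \ran(X)$ (the dimension count $\dim\ker F = n - p = \dim\ran X$ is inherent in the construction of a parity-check matrix for $X$), in which case the two descriptions of the feasible set coincide: $y - \ran(X) = \{s \in \RR^n \mid Fs = Fe\}$. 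Under this identification, \eqref{e:Cantao} becomes exactly \eqref{P:sparse-decode} with $\tilde y = Fy = Fe$.

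Next, I would apply the RIP-based sparse recovery result that precedes the theorem: since $\|e\|_0 \le m_R(F)$, there exists $q \in \{1,\dots,n\}$ with $\delta_q + \theta_{q,q} + \theta_{q,2q} < 1$ and $\|e\|_0 \le q$, so $e$ is the unique minimizer of \eqref{P:sparse-decode}. Translating back through the substitution, the unique optimal residual is $s^\ast = e$, so any minimizer $g^\ast$ of \eqref{e:Cantao} satisfies $y - Xg^\ast = e = y - Xf$, i.e.\ $Xg^\ast = Xf$. Since $X$ has full column rank, this forces $g^\ast = f$, establishing both optimality and uniqueness.

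The main obstacle is the transfer of uniqueness across the change of variables, which is why I want $\ker(F) = \ran(X)$ and not merely $\ran(X) \subseteq \ker(F)$: without the equality, the $\ell_1$ problem on $g$ could strictly contain more feasible residuals than \eqref{P:sparse-decode}, and the uniqueness of $e$ in the larger set would not follow automatically. The nontrivial step -- the $\ell_1/\ell_0$ equivalence ensuring $e$ is the unique $\ell_1$-minimizer under $\delta_q + \theta_{q,q} + \theta_{q,2q} < 1$ -- is precisely the Candes-Tao decoding theorem applied to \eqref{P:sparse-decode}, and in this proof it is taken as a black box: once the reduction is carried out, the result is immediate, and the role of full column rank of $X$ is only to promote uniqueness of $s^\ast$ to uniqueness of $g^\ast$.
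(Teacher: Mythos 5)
Your reduction is exactly the route the paper takes: Theorem~\ref{Th:Candes-Tao} is quoted from Candes--Tao without a proof of its own, and the surrounding discussion in Section~\ref{S:link} carries out precisely your change of variables $s=y-Xg$, invokes the RIP-based uniqueness of $e$ for \eqref{P:sparse-decode} as a black box, and uses the full column rank of $X$ to pass from $Xg^\ast=Xf$ to $g^\ast=f$. One small correction: your containment worry points the wrong way --- with only $FX=0$ one has $y-\ran(X)\subseteq\menge{s\in\RR^n}{Fs=Fy}$, so the residual set of \eqref{e:Cantao} is contained in (not larger than) the feasible set of \eqref{P:sparse-decode}, and uniqueness of $e$ over the larger set transfers for free to the smaller one since $e=y-Xf$ lies in both; the equality $\ker(F)=\ran(X)$ is therefore convenient but not actually needed.
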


Since then, Theorem \ref{Th:Candes-Tao} has been the object of several improvements. In \cite[Theorem 1]{Juditsky-Nemirovski} it is shown that 
$e$ is the unique solution to \eqref{P:sparse-decode} for any $\|e\|_0\leq k$ if and only if $\hat\gamma_k(F)<1/2$, where
$\hat\gamma_k(F)$ is defined as 
\begin{equation}\label{E:gammak}
\hat\gamma_k(F)=\max\limits_{s\in\RR^n}\max\limits_{\substack{M\subset N \\ |M|= k}} \left\{\sumover{M} |s_i|: \|s\|_1\leq 1, Fs=0 \right\}. 
\end{equation}
This result extends Theorem \ref{Th:Candes-Tao} by giving necessary and sufficient conditions for any given, deterministic
matrix.

However, model \eqref{E:linmod-sparse} is too simple. In practice one expects that all observations carry some noise.
 A more realistic model is
\begin{equation}\label{E:linmod-S2}
 y=Xf+z+e,
\end{equation}
where $z$ is a dense, presumably small, vector of noise and  $e$ is an arbitrary sparse vector.
Under this model, exact recovery is not longer possible. The goodness of an estimator is measured by 
its distance to some reference point, which can be the true parameter $f$, or some estimator of it.  
If there is a bound on that distance which is finite for any $e$ such that $\|e\|_0\leq k$, then the RBP 
of the estimator is at least $k$. 

In \cite{Candes-Randall}, the estimation problem is studied for the error model \eqref{E:linmod-S2}, in the particular
case of a matrix $X$ with orthonormal columns. They prove that the vector $f$ can be estimated from noisy measurements up to an additive factor 
by solving the convex problems (for $r=2$ or $r=\infty$)
\begin{equation*}%\label{P:candes-randall}
\begin{array}{cc}
\minimize{(g,b)\in \RR^p\times\RR^n}{\|y-Xg-b\|_1}\\
s.t \quad \|b\|_r\leq \sigma\\
\quad \quad X^\top b=0.
\end{array}
\end{equation*}
provided that $\|\bar{b}\|_r\leq \sigma$ and that additional conditions on the restricted isometry constants of $\sqrt{{n}/{p}}\ X^\top$ hold.

Nevertheless, a RIP-based analysis of this problem results unsatisfactory. It provides only sufficient conditions, which are very conservative 
in practice. Moreover, the only known matrices with a high value of $m_R$ are random matrices from normal or Bernoulli distributions. 
Also, it is not stable under linear transformations. For any given matrix $X$ one can find an invertible matrix $G$ in such a way 
that the RIP constants of $GX$ are arbitrarily bad, independently of those of $X$ \cite{Zhang}. This point  is particularly serious since 
a closer look at  \cite{Candes-Randall} shows that if the matrix $X$ does not have orthonormal columns, as is the case in 
statistical applications, the analysis would rely on the RIP constants of the matrix $GX^\top$, for $G=(X^\top X)^{-1}$.

In Section \ref{S:l1} we obtain sharp bounds on the estimation error of the $\ell_1$ estimator when the errors follow model \eqref{E:linmod-S2}
for a general matrix $X$. Our treatment is simple, transparent, and covers the cases with and without noise in a unified way. It serves as 
the basis for the improvement of the $\ell_1$ estimator presented in Section  \ref{S:main}.

\section{Characterization of the behaviour of the $\ell_1$-estimator faced to sparse outliers and noise}
\label{S:l1}

In this section we aim to study the problem of estimating, by $\ell_1$ minimization, the vector 
$f$ from observations of the form \eqref{E:linmod-S2}.
In our case, the matrix $X$ is only assumed to be 
of full rank and we provide deterministic and non-asymptotic error bounds for 
the estimator of $f$. In order to achieve these goals let us introduce some 
definitions and useful properties.

For a $n\times p$ matrix $X$, define for every $k\in\{1,\ldots,n\}$ the \emph{leverage constants} $c_k$ of $X$ as
\begin{equation}\label{E:def-ck}
c_k(X)= \min\limits_{\substack{M\subset N \\ |M|= k}} 
\min\limits_{\substack{ g\in \RR^p \\ g\neq 0}}
\frac{\sumover{N\setminus M} |x_i^{\top} g|}{\sumover{N}|x_i^{\top}
g|}=\min\limits_{\substack{M\subset N \\ |M|= k}} 
\min\limits_{\substack{ g\in \RR^p \\ \|g\|_2=1}}
\frac{\sumover{N\setminus M} |x_i^{\top} g|}{\sumover{N}|x_i^{\top}
g|}
\end{equation}
and
\begin{equation}\label{E:def-m(A)}
m(X)= \max\left\{k\in N\: \mid c_k(X)>\frac{1}{2} \right\}.
\end{equation}
Note that the two minima in \eqref{E:def-ck} are achieved since the
feasible set in both cases is compact and the objective function is
continuous. When there is not place for confusion, we shall omit the dependency of the constants $c_k$ on $X$.
\begin{lemma}\label{L:ck}
We have $c_0=1$, $c_n=0$ and, for every $k\in\{1,\ldots,n\}$, $c_k\leq
c_{k-1}$.
\end{lemma}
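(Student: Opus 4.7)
The statement is elementary once we unpack the definition, so my proof plan is brief and has three independent parts.

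\textbf{Boundary values.} For $k=0$ the only admissible index subset is $M=\emptyset$, so $N\setminus M=N$ and the ratio in \eqref{E:def-ck} is identically $1$ for every admissible $g$, giving $c_0=1$. For $k=n$ the only admissible subset is $M=N$, so the sum in the numerator is empty and evaluates to $0$; since $X$ has full (column) rank and $p<n$, any $g\neq 0$ gives $Xg\neq 0$, so the denominator $\sum_{i\in N}|x_i^\top g|=\|Xg\|_1$ is strictly positive and the ratio is $0$. Hence $c_n=0$.

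\textbf{Monotonicity.} Fix $k\in\{1,\ldots,n\}$. Let $(M^*,g^*)$ with $|M^*|=k-1$ and $g^*\neq 0$ be a minimizing pair in the definition of $c_{k-1}$, so that
\begin{equation*}
c_{k-1}=\frac{\sum_{i\in N\setminus M^*}|x_i^\top g^*|}{\sum_{i\in N}|x_i^\top g^*|}.
\end{equation*}
Choose any $j\in N\setminus M^*$ and set $M'=M^*\cup\{j\}$, so $|M'|=k$. Since all terms $|x_i^\top g^*|$ are nonnegative, removing the term indexed by $j$ from the numerator only decreases it:
\begin{equation*}
\sum_{i\in N\setminus M'}|x_i^\top g^*|
=\sum_{i\in N\setminus M^*}|x_i^\top g^*|-|x_j^\top g^*|
\leq\sum_{i\in N\setminus M^*}|x_i^\top g^*|.
\end{equation*}
Dividing by the common positive denominator $\sum_{i\in N}|x_i^\top g^*|$ (positive by the full-rank argument above) and using that the pair $(M',g^*)$ is feasible in the minimization defining $c_k$, we obtain
\begin{equation*}
c_k\leq\frac{\sum_{i\in N\setminus M'}|x_i^\top g^*|}{\sum_{i\in N}|x_i^\top g^*|}
\leq\frac{\sum_{i\in N\setminus M^*}|x_i^\top g^*|}{\sum_{i\in N}|x_i^\top g^*|}
=c_{k-1}.
\end{equation*}

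\textbf{Main obstacle.} There is essentially none; the only subtlety is ensuring the denominator is nonzero, which follows from the full-rank assumption on $X$ together with $g\neq 0$. Everything else is bookkeeping on the index set.
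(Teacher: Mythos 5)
Your proof is correct and follows essentially the same route as the paper's: take a minimizing pair for $c_{k-1}$, enlarge $M$ by one index, and use nonnegativity of the summands plus feasibility in the minimization defining $c_k$. The only difference is that you spell out the boundary cases and the positivity of the denominator (via full column rank), which the paper dismisses as clear.
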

\begin{proof}
It is clear that $c_0=1$ and that $c_n=0$. Let $k\in\{1,\ldots,n\}$,
let $g\in\RR^p\setminus\{0\}$, and let $M$ with $|M|=k-1$ such
that
\[c_{k-1}=\frac{\sumover{N\setminus M} |x_i^{\top}
g|}{\sumover{N}|x_i^{\top}
g|}.\] 
Now let $i_0\in N\setminus M$ and let $\widetilde{M}=M\cup\{i_0\}$.
We have $|\widetilde{M}|=k$ and, from \eqref{E:def-ck} we obtain 
\begin{equation*}
c_{k-1} =\frac{\sumover{N\setminus M} |x_i^{\top}
g|}{\sumover{N}|x_i^{\top}g|}
        =\frac{\sumover{N\setminus\widetilde{M}} |x_i^{\top}
g|+|x_{i_0} g|}{\sumover{N}|x_i^{\top}g|} 
     \geq \frac{\sumover{N\setminus\widetilde{M}} |x_i^{\top}
g|}{\sumover{N}|x_i^{\top} g|}\geq c_k,
\end{equation*}
which yields the result.
\end{proof}
%%%%%%%%%%%%%%%%%%%%%%%%%%%%%%%%%%%%%%%%%%%%%%%%%%%%%%%%%%%%%%%%%%%%%%%%%%%%%%%
\begin{remark}\label{R:quantities}
Let $F$ be such that $\ker (F)=\ran (X)$ as in Section \ref{S:link}. Let $\hat\gamma_k(F)$ be defined in \eqref{E:gammak} and
$s_*(F)=\max\left\{k\in N\: \mid \hat\gamma_k(F)<\frac{1}{2} \right\}$.
These constants are related to $c_k(X)$ and $m(X)$ via $c_k(X)=1-\hat\gamma_k(F)$ and $m(X)=s_*(F)$.
\end{remark}
\vskip .5\baselineskip

Many of the results in this article rely on the following fundamental $\ell_1$ error estimate, which is 
an extension and refinement of \cite[Lemma 5.2]{He-et-Al90}.

%%%%%%%%%%%%%%%%%%%%%%%%%%%%%%%%%%%%%%%%%%%%%%%%%%%%%%%%%%%%%%%%%%%%%%%%%%%%%%%%%%%%%%%%%%
%%%%%%%%%%%%%%%%%%%%%%%%%%%%  Lemma L1  %%%%%%%%%%%%%%%%%%%%%%%%%%%%%% 
\begin{lemma}\label{L:desigualdad-L1}
Let $X$ be a $n\times p$ real matrix, let $(c_k)_{1\leq k\leq n}$ and
$m(X)$ be defined as in \eqref{E:def-ck} and \eqref{E:def-m(A)},
respectively. In addition, let $M\subset N$, and let $y,b^*\in\RR^n$
and $g^*,g\in \RR^p$ be arbitrary. Then the following hold.
\begin{enumerate}
\item\label{L:L1i}  Suppose that $|M|=k<m(X)$. Then, 
\begin{equation*}
 \|y-X g-b^*\|_1-  \|y-Xg^*-b^* \|_1 \geq 
(2c_k-1) \|X(g-g^*)\|_1 -2 \sumover{N \setminus M} |y_i-x_i^{\top} g^*-b^*_i|.
\end{equation*}
\item\label{L:L1ii} Suppose that $|M|=0$. Then we have, for every
$b\in \RR^n$,
\begin{equation*}
 \|y-X g-b\|_1-  \|y-Xg^*-b^* \|_1 \geq \|
X(g-g^*)+b-b^*\|_1 -2 \sumover{N} |y_i-b^*_i- x_i^{\top} g^*|.
\end{equation*}
\end{enumerate}

\end{lemma}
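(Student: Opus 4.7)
The plan is to prove both parts by writing the residuals as perturbations of a reference residual and then using the triangle inequality in two directions.

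For part (i), I introduce $r_i^* := y_i - x_i^\top g^* - b_i^*$ and $r_i := y_i - x_i^\top g - b_i^*$, so that $r_i = r_i^* - x_i^\top(g-g^*)$. I then split $N = M \cup (N\setminus M)$ and treat the two blocks asymmetrically. On $N\setminus M$ I apply the reverse triangle inequality $|r_i| \ge |x_i^\top(g-g^*)| - |r_i^*|$, giving $|r_i| - |r_i^*| \ge |x_i^\top(g-g^*)| - 2|r_i^*|$. On $M$ I use the ordinary triangle inequality $|r_i^*| \le |r_i| + |x_i^\top(g-g^*)|$, giving $|r_i| - |r_i^*| \ge -|x_i^\top(g-g^*)|$. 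Summing over all $i$ yields
\begin{equation*}
\|r\|_1 - \|r^*\|_1 \;\ge\; \sumover{N\setminus M}|x_i^\top(g-g^*)| - \sumover{M}|x_i^\top(g-g^*)| - 2\sumover{N\setminus M}|r_i^*|.
\end{equation*}

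The key step is then to invoke the definition of the leverage constants. Since $|M| = k$, applying \eqref{E:def-ck} (either to $g-g^*$ or trivially if $g = g^*$) gives
\begin{equation*}
\sumover{N\setminus M}|x_i^\top(g-g^*)| \;\ge\; c_k \|X(g-g^*)\|_1,
\end{equation*}
and consequently $\sum_{M}|x_i^\top(g-g^*)| = \|X(g-g^*)\|_1 - \sum_{N\setminus M}|x_i^\top(g-g^*)| \le (1 - c_k)\|X(g-g^*)\|_1$. Plugging these two bounds in produces the coefficient $c_k - (1-c_k) = 2c_k - 1$ in front of $\|X(g-g^*)\|_1$, yielding part (i). (The condition $k < m(X)$ ensures via Lemma \ref{L:ck} that $2c_k - 1 > 0$, so the bound is informative, though the inequality itself holds regardless.)

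Part (ii) is a direct specialization. With $M = \emp$, there is no favorable block to exploit and one cannot separate $g$ from $b$, so I set $r_i := y_i - x_i^\top g - b_i$ and $r_i^* := y_i - x_i^\top g^* - b_i^*$, giving $r_i = r_i^* - \bigl(x_i^\top(g-g^*) + (b_i - b_i^*)\bigr)$. A global application of the reverse triangle inequality gives $|r_i| \ge |x_i^\top(g-g^*) + (b_i - b_i^*)| - |r_i^*|$, and summing yields the stated bound directly, with no appeal to $c_k$ needed.

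I do not foresee a real obstacle: the proof is essentially a bookkeeping exercise on the triangle inequality. The only subtle point is making sure the inequality $\sum_{N\setminus M}|x_i^\top(g-g^*)| \ge c_k\|X(g-g^*)\|_1$ is applied correctly when $g = g^*$ (trivially $0 \ge 0$) and that one uses the minimization defining $c_k$ over both $M$ of size exactly $k$ and $g\ne 0$, so that the bound holds for our \emph{particular} $M$ and $g-g^*$.
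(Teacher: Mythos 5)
Your proof is correct and follows essentially the same route as the paper: split $N$ into $M$ and $N\setminus M$, apply the (reverse) triangle inequality asymmetrically on the two blocks, and then invoke the definition of $c_k$ to bound $\sumover{N\setminus M}|x_i^\top(g-g^*)|$ from below, which yields the coefficient $2c_k-1$; part (ii) is likewise the same one-line triangle-inequality argument the paper gives. The only differences are cosmetic (you carry $c_k$ and $1-c_k$ separately where the paper writes $2\sumover{N\setminus M}-\sumover{N}$), and your remarks about the $g=g^*$ case and the fixed $M$ are sound.
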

\begin{proof}
\ref{L:L1i}: Let $y,b^*\in\RR^n$ and $g^*,g\in \RR^p$. We have, 
\begin{align*}
 \|y-Xg-b^*\|_1 &= \sumover{N} |y_i-x_i^{\top}g-b_i^*| \\
&= \sumover{N} |(y_i-x_i^{\top}g^*-b^*_i)-(x_i^{\top}g-x_i^{\top}g^*)|
\\
&= \sumover{N\setminus M}
|(x_i^{\top}g-x_i^{\top}g^*)-(y_i-x_i^{\top}g^*-b^*_i)| \\ 
&+\sumover{M} |(y_i-x_i^{\top}g^*-b^*_i)-(x_i^{\top}g-x_i^{\top}g^*)|
\end{align*}
and using the reverse triangle inequality $|u-v|\geq ||u|-|v||\geq |u|-|v|$ we obtain
\begin{align}\label{E:des1}
 \|y-Xg-b^*\|_1 &\geq 2\sumover{N\setminus
M}|x_i^{\top}g-x_i^{\top}g^*|-\sumover{N}|x_i^{\top}g-x_i^{\top}g^*|
\\ \nonumber 
&\quad+\sumover{N} |y_i-x_i^{\top}g^*-b^*_i| -2 \sumover{N\setminus
M}
|y_i-x_i^{\top}g^*-b^*_i|.
\end{align}

It follows from \eqref{E:def-m(A)} and \eqref{E:def-ck} that 
$c_k>1/2$ and there exists $g_k\neq g^*$ such that
\[ (\forall g,g^*\in\RR^p)\quad\text{s.t.}\quad g\neq g^*\quad
\frac{\sumover{N\setminus
M}|x_i^{\top}(g-g^*)|}{\sumover{N}|x_i^{\top}(g-g^*)|}\geq
\frac{\sumover{N\setminus
M}|x_i^{\top}(g_k-g^*)|}{\sumover{N}|x_i^{\top}(g_k
-g^*)|} =c_k,
\]
Thus,
$$
 \sumover{N\setminus M}|x_i^{\top}(g-g^*)|\geq c_k
\sumover{N}|x_i^{\top}(g-g^*)|.
$$
By replacing in \eqref{E:des1} we obtain:
\begin{equation*}%\label{E:des2}
 \|y-Xg-b^*\|_1 - \|y-Xg^*-b^*\|_1 \geq (2c_k-1) \|X(g-g^*)\|_1
  -2 \sumover{N\setminus M} |y_i-x_i^{\top}g^*-b^*_i|
\end{equation*}
and the result holds.

\ref{L:L1ii}: The result is a direct consequence of the triangle
inequality of the $L1$ norm.
\end{proof}

Next, we provide an estimate for the reconstruction error of a solution $f_1$ to the $\ell_1$
minimization problem \eqref{e:Cantao} depending on the level of
contamination, including both noise and outliers. Since the least
squares estimator is optimal in the absence of outliers,
we measure the reconstruction error by comparing $f_1$ with $f_n$,
which is the least squares estimator in this case. More precisely, 
if $y_n:=y-e=Xf+z$ is the noisy part of the data, without outliers, and 
$z=X\overline{g}+\overline{b}$, with $\overline{b}\in Ker X^\top$ is the orthogonal decomposition of the
noise, the LSE on the data $y_n$ is $f_n=(X^\top X)^{-1}X^\top y_n=f+\overline{g}$. \\

%%%%%%%%%%%%%%%%%%%%%%%%%%%%%%%%%%%%%%%%%%%%%%%%%%%%%%%%%%%%%%%%%%%%
%%%%%%%%%%%%%%%%%%%% Theorem  pure-l1 Estimate   %%%%%%%%%%%%%%%%%
%%%%%%%%%%%%%%%%%%%%%%%%%%%%%%%%%%%%%%%%%%%%%%%%%%%%%%%%%%%%%%%%%%%%
\begin{theorem}\label{Th:estimate-pure-l1}
Let $y=Xf+z+e$ and $M=\supp(e)$. Suppose that 
$|M|=k\leq m(X)$. Consider the unique
decomposition of $z$ as $z=X\overline{g}+\overline{b}$, where
 $\overline{g}\in\RR^p$ and $\overline{b}\in Ker X^\top$, and let $f_n=f+\bar{g}$ as discused above. Then
the following hold for $f_1$.
\begin{enumerate}
\item 
If $\|\overline{b}\|_{\infty,N\setminus M}=0$, then $f_1=f_n$.
\item If $\|\overline{b}\|_{\infty,N\setminus M}>0$, then
\begin{equation}\label{E:l1-estimate}
\|X(f_1 -f_n)\|_1 \leq \frac{1}{2c_k-1} \left( \|\bar{b}\|_{1,N\setminus M} + 
\frac{\|\bar{b}\|_{2,N\setminus M}^2}{\|\bar{b}\|_{\infty,N\setminus M}}\right).
\end{equation}
\end{enumerate}
\end{theorem}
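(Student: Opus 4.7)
The strategy is to apply Lemma~\ref{L:desigualdad-L1}(i) with $g=f_1$, $g^*=f_n$ and a well chosen $b^*$, then combine with the optimality inequality $\|y-Xf_1\|_1\le\|y-Xf_n\|_1$.

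For part (i), I would take $b^*=0$. Since the hypothesis $\|\bar b\|_{\infty,N\setminus M}=0$ forces $\bar b_i=0$ on $N\setminus M$, the correction term $2\sum_{i\in N\setminus M}|y_i-x_i^\top f_n|=2\|\bar b\|_{1,N\setminus M}$ appearing in the lemma vanishes, leaving $\|y-Xf_1\|_1-\|y-Xf_n\|_1\ge(2c_k-1)\|X(f_1-f_n)\|_1$. Since $k\le m(X)$ gives $c_k>1/2$ by Lemma~\ref{L:ck} and the left-hand side is nonpositive by minimality of $f_1$, we conclude $\|X(f_1-f_n)\|_1=0$; the full rank of $X$ then forces $f_1=f_n$.

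For part (ii), set $v_i=x_i^\top(f_1-f_n)$. I would first sharpen the consequence of minimality to the intermediate inequality
\[
(2c_k-1)\,\|X(f_1-f_n)\|_1\;\le\;2\sum_{i\in N\setminus M}\min(|v_i|,|\bar b_i|).
\]
Starting from $\|y-Xf_1\|_1\le\|y-Xf_n\|_1$ and comparing summands on $M$ versus $N\setminus M$, the elementary reverse triangle inequality gives $|\bar b_i+e_i-v_i|-|\bar b_i+e_i|\ge-|v_i|$ on $M$, while a short sign/magnitude case analysis on $N\setminus M$ (cases $\bar b_i v_i\ge0$ with $|v_i|\lessgtr|\bar b_i|$, and $\bar b_i v_i<0$) establishes the sharper pointwise bound $|\bar b_i-v_i|-|\bar b_i|\ge|v_i|-2\min(|v_i|,|\bar b_i|)$. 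Summing and invoking $\|v\|_{1,N\setminus M}-\|v\|_{1,M}\ge(2c_k-1)\|v\|_1$ (from the definition of $c_k$) produces the displayed inequality.

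The final step is to upgrade the right-hand side to $\|\bar b\|_{1,N\setminus M}+\|\bar b\|_{2,N\setminus M}^2/\|\bar b\|_{\infty,N\setminus M}$. The building block is the universal AM--GM-type bound $2\min(a,b)\le |\bar b_i|+\min(a,b)^2/|\bar b_i|$, equivalent to the square inequality $(|\bar b_i|-\min(a,b))^2\ge0$. Summing this with $a=|v_i|$, $b=|\bar b_i|$ and splitting $N\setminus M$ into the sets $\{|v_i|\le|\bar b_i|\}$ and $\{|v_i|>|\bar b_i|\}$ (on which $\min$ saturates to $|v_i|$ and $|\bar b_i|$ respectively), a Cauchy--Schwarz estimate weighted by $|\bar b_i|/\|\bar b\|_{\infty,N\setminus M}$ injects the $\ell_2^2/\ell_\infty$ correction. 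The main obstacle is precisely this last step: the naive estimate $\min(|v_i|,|\bar b_i|)\le|\bar b_i|$ only delivers the strictly coarser bound $\|v\|_1\le 2\|\bar b\|_{1,N\setminus M}/(2c_k-1)$, which is larger than the claimed one whenever the $|\bar b_i|$'s are not all equal to $\|\bar b\|_{\infty,N\setminus M}$, so obtaining the correct multiplicative constants requires the delicate bookkeeping balancing the pointwise $\min$ bound against the weighted Cauchy--Schwarz estimate.
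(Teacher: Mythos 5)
Your part (i) is exactly the paper's argument: Lemma~\ref{L:desigualdad-L1}\ref{L:L1i} with $b^*=0$, the observation that the correction term $2\|\bar b\|_{1,N\setminus M}$ vanishes, minimality of $f_1$, and full rank of $X$. Nothing to add there.

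In part (ii) you depart from the paper, and the departure is where the problem lies. Your pointwise case analysis is correct and yields the intermediate inequality $(2c_k-1)\|X(f_1-f_n)\|_1\le 2\sum_{i\in N\setminus M}\min(|v_i|,|\bar b_i|)$ with $v=X(f_1-f_n)$; this is in fact sharper than the paper's own first estimate \eqref{E:first-estimate}, which only uses $\min(|v_i|,|\bar b_i|)\le|\bar b_i|$. But the final step cannot be completed as sketched, because the inequality $2\sum_{i\in N\setminus M}\min(|v_i|,|\bar b_i|)\le\|\bar b\|_{1,N\setminus M}+\|\bar b\|_{2,N\setminus M}^2/\|\bar b\|_{\infty,N\setminus M}$ is false in general. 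If $|v_i|\ge|\bar b_i|$ for every $i\in N\setminus M$, the left side equals $2\|\bar b\|_{1,N\setminus M}$, whereas by H\"older the right side is at most $2\|\bar b\|_{1,N\setminus M}$ with equality only when all nonzero $|\bar b_i|$ coincide; concretely, $|\bar b|=(2,1)$ and $|v|=(2,1)$ on $N\setminus M$ give $6>5.5$. Your AM--GM step $2\min\le|\bar b_i|+\min^2/|\bar b_i|$ is fine, but the residual sum $\sum_{i\in N\setminus M}\min(|v_i|,|\bar b_i|)^2/|\bar b_i|$ is only bounded by $\|\bar b\|_{1,N\setminus M}$, not by $\|\bar b\|_{2,N\setminus M}^2/\|\bar b\|_{\infty,N\setminus M}$, and no weighting by $|\bar b_i|/\|\bar b\|_{\infty,N\setminus M}$ repairs this, since the failure is already pointwise on the saturated set $\{i:|v_i|>|\bar b_i|\}$. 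The missing ingredient is information about $f_1$ beyond the primal inequality $\|y-Xf_1\|_1\le\|y-Xf_n\|_1$: the paper obtains the $\ell_2^2/\ell_\infty$ correction from LP duality, writing $\|y-Xf_1\|_1=\max_{d\in P^*}d^{\top}y$ with $P^*=\menge{d\in\ker X^{\top}}{\|d\|_{\infty}\leq 1}$ and invoking Lemma~\ref{L:tecnico}, which exploits the constraint $d\in\ker X^{\top}$ and the exact dual attainment on the sparse part $M$ --- precisely the structure your purely componentwise estimate discards. To close the gap you would have to reintroduce the dual optimality conditions at this stage, which essentially lands you back on the paper's route.
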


\begin{proof}
Using Lemma \ref{L:desigualdad-L1}\ref{L:L1i} with $b^*=0, g=f_1$, and
$g^*=f_n$ we obtain
  $$
 \|y-X f_1\|_1-  \|y-X f_n\|_1 \geq (2c_k-1)
\|X(f_1-f_n)\|_1 -2 \sumover{N \setminus M} |y_i- x_i^\top
f_n|. $$
Since, by hypothesis, $y_i=x_i^\top(f+\bar{g})+\bar{b}_i=x_i^\top f_n+\bar{b}_i$ for
$i\in N\setminus M$ we have
\begin{equation}\label{E:first-estimate}
(2c_k-1) \|X(f_1-f_n)\|_1 \leq 2 \|\bar{b}\|_{1,N \setminus M} + \|y-X f_1\|_1-  \|y-X f_n\|_1.
\end{equation}
 First note that, since $f_1$ is a minimizer, $\|y-X f_1\|_1-  \|y-X f_n\|_1\leq 0$, thus if $\|\bar{b}\|_{\infty,N \setminus M}=0$ it follows 
from \eqref{E:first-estimate} and the hypothesis of full rank on $X$ that $f_1=f_n$. Now suppose that $\|\bar{b}\|_{\infty,N \setminus M} > 0$.
By LP duality \cite[p. 1031-1032]{Giloni-Padberg},
$$
\|y-Xf_1\|_1=\min_{g\in\RR^p} \|y-Xg\|_1 =\max_{d\in P^*} d^\top y,
$$
where $P^*=\menge{d\in\ker X^{\top}}{\|d\|_{\infty}\leq 1}$ .
Thus, $$\|y-X f_1\|_1-  \|y-X f_n\|_1= \max_{d\in P^*} d^\top (e+\bar{b}) -  \|e+\bar{b}\|_1.$$
Hence, by using Lemma \ref{L:tecnico}, we obtain
\begin{align*}
\|y-X f_1\|_1-  \|y-X f_n\|_1 &\leq \|e+\bar{b}\|_{1,M}+\frac{\|\bar{b}\|_{2,N\setminus M}^2}{\|\bar{b}\|_{\infty,N\setminus M}} -  
\|e+\bar{b}\|_1\\
& = -\|\bar{b}\|_{1,N\setminus M}+\frac{\|\bar{b}\|_{2,N\setminus M}^2}{\|\bar{b}\|_{\infty,N\setminus M}}
\end{align*}
which altogether with \eqref{E:first-estimate} yields \eqref{E:l1-estimate}. 
\end{proof}
%%%%%%%%%%%%%%%%%%%%%%%%%%%%%%%%%%%%%%%%%%%%%%%%%%%%%%%%%%%%%%%%%%%

\begin{remark}\label{R:mejor}
Note that, by H\"older inequality, 
$\|\bar{b}\|_{2,N\setminus M}^2 \leq \|\bar{b}\|_{1,N\setminus M} \|\bar{b}\|_{\infty,N\setminus M}$  
then 
\begin{equation*}%\label{E:holder-on-bound}
\|\bar{b}\|_{1,N\setminus M} + 
\frac{\|\bar{b}\|_{2,N\setminus M}^2}{\|\bar{b}\|_{\infty,N\setminus M}}\leq 2 \|\bar{b}\|_{1,N\setminus M}\leq 2 \|\bar{b}\|_1.
\end{equation*}
\end{remark}

In the particular case when only sparse errors are present ($z=0$), the following result is a 
characterization of the exact recovery property, which improves Theorem \ref{Th:Candes-Tao} 
(see also \cite[Theorem 1]{Juditsky-Nemirovski} and \cite[Proposition 2.3]{Zhang} for related results).

\begin{theorem}
\label{Th:sparse-case} 
Let $f\in \RR^p$,  $e\in\RR^n$, and set
$y=Xf+e$. Then $f$ is the unique solution of the problem 
$$
\min_{g\in \RR^d} \|y-Xg\|_{1}.
$$
for any $\|e\|_{0}\leq k$ if and only if $k\leq m(X)$.
\end{theorem}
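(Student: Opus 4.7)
The statement is an if-and-only-if, so I would split it into two implications.

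\textbf{Sufficiency} ($k\le m(X)\Rightarrow$ exact recovery). This follows almost directly from Theorem \ref{Th:estimate-pure-l1}. With $z=0$ we have $\bar g=0$, $\bar b=0$, so $f_n=f$ and $\|\bar b\|_{\infty,N\setminus M}=0$. Since $|M|=\|e\|_0\le k\le m(X)$, applying \eqref{E:first-estimate} with $\bar b=0$ together with optimality of $f_1$ gives $(2c_k-1)\|X(f_1-f)\|_1\le 0$. As $c_k>1/2$ when $k\le m(X)$, we conclude $X f_1=Xf$, and the full-rank assumption on $X$ yields $f_1=f$. This proves uniqueness. (Strictly, Lemma \ref{L:desigualdad-L1}(i) is used at $|M|=k$; I would note that Lemma \ref{L:ck} makes monotonicity of $c_k$ available for handling $\|e\|_0$ strictly smaller than $k$.)

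\textbf{Necessity} ($k>m(X)\Rightarrow$ failure). This is the main obstacle, and the plan is to construct an explicit sparse $e$ for which $f$ is not the unique $\ell_1$ minimizer. Since $k>m(X)$, the monotonicity in Lemma \ref{L:ck} gives $c_k\le 1/2$. By definition of $c_k$ in \eqref{E:def-ck}, there exist $M\subset N$ with $|M|=k$ and $g^*\in\RR^p\setminus\{0\}$ such that
\begin{equation*}
\sumover{N\setminus M}|x_i^\top g^*|\;\le\;\sumover{M}|x_i^\top g^*|.
\end{equation*}

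I would then define the adversarial outlier vector by setting $e_i=-x_i^\top g^*$ for $i\in M$ and $e_i=0$ for $i\in N\setminus M$, so that $\|e\|_0\le|M|=k$, and take $y=Xf+e$. Evaluating the objective at the competing point $g=f-g^*$ gives $\|y-X(f-g^*)\|_1=\sumover{N\setminus M}|x_i^\top g^*|$, while $\|y-Xf\|_1=\sumover{M}|x_i^\top g^*|$. The inequality above then gives $\|y-X(f-g^*)\|_1\le\|y-Xf\|_1$. Since $g^*\neq 0$ and $X$ is full rank, $X(f-g^*)\neq Xf$: either the inequality is strict and $f$ is not a minimizer at all, or it is an equality and $f$ and $f-g^*$ are both minimizers. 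In either case $f$ fails to be the unique solution, which proves the contrapositive.

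I do not expect any serious calculation to be hard; the one subtlety to be careful about is to phrase the necessity correctly ("there exists $e$ with $\|e\|_0\le k$ breaking uniqueness") and to ensure the attaining $M$ and $g^*$ in \eqref{E:def-ck} indeed exist (which is noted right after \eqref{E:def-m(A)} using compactness and continuity).
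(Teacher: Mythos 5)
Your proposal is correct and follows essentially the same route as the paper: sufficiency via Theorem \ref{Th:estimate-pure-l1} with $z=0$, and necessity via an adversarial sparse $e$ built from the minimizing pair $(M,g^*)$ in \eqref{E:def-ck}, differing only by a sign convention and the absence of the scaling parameter $\alpha$ (which the paper keeps in order to remark afterwards that the estimation error can be made arbitrarily large).
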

\begin{proof}
First note that, in this case, $f_n=f$. If  $\|e\|_{0}\leq m(X)$, by
using Theorem~\ref{Th:estimate-pure-l1} with $z=0$, we obtain that
$X(f_1-f_n)=X(f_1-f)=0$ and, since $X$ has full rank, we conclude that $f_1=f$.
Now let us show that for $k=\|e\|_0> m(X)$ we can find an instance of the
problem for which $f$, whether is not a solution, or it is not the unique solution.
Let $f\in\RR^p$ be arbitrary. From the definiton of $c_k$, there
exists $g_k\in \RR^p$ such that $\|g_k\|_2=1$ and $M\subseteq N, |M|=k$ such that 
\begin{equation}\label{E:contrad} 
\sumover{N\setminus M} |x_i^\top g_k| \leq \sumover{M}|x_i^\top g_k|.
\end{equation}

Now define, for $\alpha>0$,
\begin{equation*}
\overline{e}_i=\alpha\cdot
\begin{cases}
x_i^\top  g_k, &\text{if } i\in M;\\
0, &\text{otherwise}
\end{cases}
\end{equation*}
and $\overline{y}=Xf+\overline{e}$. Then,
\begin{eqnarray*}
\|\overline{y}-Xf\|_1=\alpha \sumover{M}|x_i^\top g_k|\\
\|\overline{y}-X(f+\alpha g_k)\|_1= \alpha \sumover{N\setminus M} |x_i^\top  g_k|.
\end{eqnarray*}
Hence, it follows from \eqref{E:contrad} that
$\|\overline{y}-X(f+\alpha g_k)\|_1\leq \|\overline{y}-Xf\|_1$, then $f+\alpha g_k$ is a minimizer.
\end{proof}

The proof of Theorem \ref{Th:sparse-case} shows that if $k>m(X)$, the for any $\alpha>0$ we can find a vector $e$ such that 
$\|e\|_0=k$ and the $\ell_1$ estimator $f_1$ on the data $y=Xf+\alpha e$ satisfies $\|f_1-f\|_2=\alpha$.
Combined with Theorem \ref{Th:estimate-pure-l1} this shows that the RBP of the $\ell_1$ estimator equals $m(X)$, recovering results from
\cite{Giloni-Padberg,Mizera-Muller99}.
At the same time, it shows the close relation between the concepts of regression breakdown point and exact recovery of sparse signals.
The most important consequence of this relation is the aproximation of the RBP of a given matrix using SemiDefinite Programming (SDP). 
Indeed, in \cite{ElGhaoui,Juditsky-Nemirovski} we can find SDP bounds on $\hat\gamma_k(F)$ that, in view of Remark \ref{R:quantities}, give 
lower bounds on the quantities $c_k(X)$, and thus on $m(X)$, which characterizes the RBP of a given matrix.

In the next section we introduce a new estimator for the 
model including sparse errors and noise. We also verify that this new 
estimator has a better performance compared to the $\ell_1$ estimator 
when dense noise and sparse errors are present.

\section{A robust estimator against sparse outliers and noise}\label{S:main}
In this section, we derive a new technique for estimating $f$ from 
\begin{equation*}%\label{E:linmod-S4}
y=Xf+z+e,
\end{equation*}
where $z$ is a noise and  $e$ is an arbitrary sparse vector. Our estimator keeps the robustness of the $\ell_1$ estimator while improving its response to
noisy observations. In particular, in the absence of outliers, it reduces to the LSE. 

Theorem~\ref{Th:sparse-case} proves the efficacy of the $\ell_1$ estimator when dealing with sparse errors.
In contrast, Theorem~\ref{Th:estimate-pure-l1} highlights the drawbacks of this estimator when facing noisy observations. 
Since the LSE is the optimal choice when facing gaussian noise, it is natural to aim at combining their main strengths. 
The previous discussion motivates the following definition.

\begin{definition}
\label{prob:main}
Let $\sigma>0$, let $y\in\RR^n$, and let $X$ be a $n\times p$ real matrix with full rank. 
The $\ell_1\square\ell_2$ estimator is defined as the first component $\hat{g}$ of a solution to 
\begin{equation}\label{P:main-definition}
\begin{array}{cc}
\minimize{(g,b,s)\in
\RR^p\times\RR^n\times\RR^n}{\sigma\|s\|_{1}+\frac{1}{2}\|b\|^2_{2}}\\
s.t \quad y=Xg+b+s,
\end{array}
\end{equation}
where $b$ and $s$ are optimization variables estimating $z$ and $e$, respectively, and
$\sigma$ is an estimate of the magnitude of the noise.
\end{definition}

\begin{remark}
Note that \eqref{P:main-definition} can be set in the form of \eqref{P:M-estimators} with 
$$
\rho\colon r\mapsto\rho(r)=\inf_{b\in\RR^n} \sigma\|r-b\|_{1}+\frac{1}{2}\|b\|^2_{2}=\frac{1}{2}\|\cdot\|^2_{2}\square\|\cdot\|_1(r),
$$
where $h_1\square h_2=\inf_{u} \{h_1(\cdot-u)+h_2(u)\}$ denotes the \emph{inf-convolution} of $h_1$ and $h_2$
\cite{Rockafellar,HU-Lemarechal93(I)}.
In other words, Definition \ref{prob:main} amounts to defining the $\ell_1\square\ell_2$ estimator as a minimizer
 of the inf-convolution of the $\ell_1$ norm and the squared $\ell_2$ norm of the residuals. 
That is the reason for using the notation $\ell_1\square\ell_2$ for our estimator.
\end{remark}

Problem \eqref{P:main-definition} can be reduced by isolating $b$ or
$s$ from the linear constraint. 
This brings up the following two equivalent problems:
\begin{equation}\label{P:estimador-l1-1}
\minimize{(g,b)\in\RR^p\times\RR^n}{
\psi(g,b):=\sigma\|y-Xg-b\|_{1}+\frac{1}{2}\|b\|^2_{2}}.
\end{equation}
and
\begin{equation}\label{P:estimador-l1-2}
\minimize{(g,s)\in\RR^p\times\RR^n}{
\phi(g,s):=\sigma\|s\|_{1}+\frac{1}{2}\|y-Xg-s\|^2_{2}}
\end{equation}

Problems \eqref{P:estimador-l1-1} and \eqref{P:estimador-l1-2} are equivalent to  Problem \eqref{P:main-definition}.
The existence of solutions is ensured by the full rank condition on $X$, and the coercivity and continuity of the objective functions.
\vskip .5\baselineskip

Problem \eqref{P:estimador-l1-1} is more advantageous for analysis of theoretical properties of solutions, while
Problem \eqref{P:estimador-l1-2} is better adapted to be numerically solved. For these reasons we state and proof here 
the optimality conditions of Problem  \eqref{P:estimador-l1-1} and postpone the analysis of Problem  \eqref{P:estimador-l1-2}   
for Section \ref{S:algorithm}.

%%%%%%%%%%%%%%%%%%%%%%%%%%%%%%%%%%%%%%%%%%%%%%%%%%%%%%%%%%%%%%%%%%%%%%%%%%%%
%-------------------- Lemma Optimality Conditions -------------------------%
%%%%%%%%%%%%%%%%%%%%%%%%%%%%%%%%%%%%%%%%%%%%%%%%%%%%%%%%%%%%%%%%%%%%%%%%%%%%

\begin{lemma}\label{L:optimality-conditions}
The following hold.
\begin{enumerate}
\item\label{L:optimality-conditionsi} $(\hat g,\hat b)$ is a solution to \eqref{P:estimador-l1-1} if and only if
$X^{\top} \hat b=0$ and
\begin{equation}\label{E:b_at_optimum}
(\forall i\in\{1,\ldots,n\})\quad \hat{b}_i=
\begin{cases}
\sigma,\quad&\text{if }y_i-x_i^{\top}\hat{g}>\sigma;\\
y_i-x_i^{\top}\hat{g},&\text{if }y_i-x_i^{\top}\hat{g}\in\left[-\sigma,\sigma\right];\\
-\sigma,&\text{if }y_i-x_i^{\top}\hat{g}<-\sigma.
\end{cases}
\end{equation}
In particular $\|\hat{b}\|_{\infty}\leq\sigma$.
\item\label{L:optimality-conditionsii} The dual of \eqref{P:estimador-l1-1} is 
\begin{equation}\label{e:dual}
\gamma:=\max_{u\in\sigma P^*}u^{\top}y-\frac{1}{2}\|u\|_2^2,
\end{equation}
where 
$P^*=\menge{u\in\ker X^{\top}}{\|u\|_{\infty}\leq 1}$
and $\min_{(g,b)\in\RR^p\times\RR^n}\psi(g,b)=\gamma$.
\end{enumerate}
\end{lemma}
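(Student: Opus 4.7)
For part \ref{L:optimality-conditionsi}, the plan is to apply Fermat's rule \eqref{E:Fermat} to the convex function $\psi$ and compute $\partial\psi(\hat g,\hat b)$ by the chain rule, writing the $\ell_1$ term as $\sigma\|\cdot\|_1\circ A$ with $A(g,b)=y-Xg-b$. The subdifferential of $\sigma\|\cdot\|_1$ at a point is described componentwise by Lemma~\ref{L:prox}\ref{L:proxi}, so optimality is equivalent to the existence of a single vector $w\in\RR^n$ with $w_i\in\partial\sigma|\cdot|(y_i-x_i^\top\hat g-\hat b_i)$ such that the $g$-block reads $-X^\top w=0$ and the $b$-block reads $-w+\hat b=0$. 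The second equation identifies $\hat b=w$ as a valid subgradient; substituting into the first gives $X^\top\hat b=0$. The same identification $\hat b\in\partial\sigma|\cdot|(y-X\hat g-\hat b)$ read componentwise yields the three-branch formula \eqref{E:b_at_optimum}: indeed if $y_i-x_i^\top\hat g-\hat b_i>0$ the subgradient forces $\hat b_i=\sigma$, which is consistent only when $y_i-x_i^\top\hat g>\sigma$, and symmetrically for the negative branch; the middle case forces $y_i-x_i^\top\hat g=\hat b_i\in[-\sigma,\sigma]$. The bound $\|\hat b\|_\infty\leq\sigma$ then reads off from the three branches. (Equivalently, one can fix $g$ and recognise $\hat b$ as $(y-X\hat g)-\prox_{\sigma\|\cdot\|_1}(y-X\hat g)$ via Lemma~\ref{L:prox}\ref{L:proxii}, which gives the same formula cleanly.)

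For part \ref{L:optimality-conditionsii}, I would work from the equivalent formulation \eqref{P:main-definition} and form the Lagrangian
\begin{equation*}
L(g,b,s,u)=\sigma\|s\|_1+\tfrac12\|b\|_2^2+u^\top(y-Xg-b-s).
\end{equation*}
Minimising block by block, the $g$-infimum is finite only when $X^\top u=0$, in which case it is $0$; the $b$-infimum of $\tfrac12\|b\|_2^2-u^\top b$ is attained at $b=u$ with value $-\tfrac12\|u\|_2^2$; and the $s$-infimum of $\sigma\|s\|_1-u^\top s$ is $0$ when $\|u\|_\infty\leq\sigma$ and $-\infty$ otherwise. Gluing the three pieces, the dual function is $u^\top y-\tfrac12\|u\|_2^2$ on $\{u\in\ker X^\top:\|u\|_\infty\leq\sigma\}=\sigma P^*$ and $-\infty$ elsewhere, giving exactly \eqref{e:dual}. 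Strong duality (so that $\min\psi=\gamma$) holds because the primal is convex with only affine constraints and the objective is coercive with finite optimal value; this is a standard consequence of the Fenchel--Rockafellar duality theorem in $\RR^n$.

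The computations are routine; the only point that requires care is the coupling in part \ref{L:optimality-conditionsi}, namely that the subgradient $w$ appearing in the $g$-optimality and in the $b$-optimality is \emph{the same} element of $\partial\|\cdot\|_1(y-X\hat g-\hat b)$ (which is what makes the identification $\hat b=\sigma w$ legitimate and propagates to $X^\top\hat b=0$). This is automatic from applying the chain rule to $\psi$ seen as a single convex function of $(g,b)$, rather than minimising in $g$ and $b$ separately.
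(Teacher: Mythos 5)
Your proposal is correct and follows essentially the same route as the paper: part \ref{L:optimality-conditionsi} via Fermat's rule and the subdifferential chain rule applied to $\psi$ as a single function of $(g,b)$ (the paper then reads off \eqref{E:b_at_optimum} through the prox characterization of Lemma~\ref{L:prox}\ref{L:proxii}, which you note as the equivalent alternative), and part \ref{L:optimality-conditionsii} via the same blockwise Lagrangian minimization with strong duality justified by a standard qualification condition. The only blemish is the stray ``$\hat b=\sigma w$'' in your closing remark, which contradicts your earlier (correct) identification $\hat b=w$ with $w\in\partial\sigma\|\cdot\|_1(y-X\hat g-\hat b)$.
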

%---------------------------------   Optimality Conditions: Proof     ----------------------------------%
\begin{proof}
Note that $\psi(g,b)$ can be equivalently written as
\begin{equation}
\label{e:repmatrix}
\psi(g,b)=\sigma\|y-\left[\begin{smallmatrix}X & I_n \end{smallmatrix}\right]\left(\begin{smallmatrix}g\\ b \end{smallmatrix}\right)\|_1
+\frac{1}{2}\|\left[\begin{smallmatrix} 0_p & I_n \end{smallmatrix}\right]\left(\begin{smallmatrix}g\\ b \end{smallmatrix}\right)\|_2^2
\end{equation}
where $I_n$ denotes the identity matrix of size $n\times n$ and $0_p$ the zero matrix of size $p\times p$.

\ref{L:optimality-conditionsi}:
Since \eqref{P:estimador-l1-1} is convex, a necessary and sufficient conditions for 
a solution $(\hat g,\hat b)$ to Problem \eqref{P:estimador-l1-1}
is
\begin{equation}\label{E:fermat}
0\in \partial \psi(\hat g,\hat b).
\end{equation}
Hence, by using \cite[Theorem 4.2.1]{HU-Lemarechal93(I)} in \eqref{e:repmatrix} (qualification conditions are trivially satisfied), 
\eqref{E:fermat} is equivalent to
\begin{equation*}
\left(\begin{smallmatrix}0\\ 0 \end{smallmatrix}\right)\in - \left[\begin{smallmatrix}X^T\\ I_n \end{smallmatrix}\right] \partial\sigma \|\cdot\|_1(y-X\hat g-\hat b)+\left(\begin{smallmatrix}0\\\hat b \end{smallmatrix}\right).
\end{equation*}
Therefore, there exists $u\in \partial \sigma \|\cdot\|_1(y-X\hat g-\hat b)$ such that
\begin{equation*}
\begin{cases}
X^Tu=0,\\
\hat b=u
\end{cases}
\end{equation*}
or, equivalently,
\begin{equation*}
\begin{cases}
\hat b \in \partial \sigma \|\cdot\|_1(y-X\hat g-\hat b),  \\
X^T \hat b=0. 
\end{cases}
\end{equation*}
Hence
\[y-X\hat{g}-\hat{b}=\prox_{\sigma \|\cdot\|_1}(y-X\hat{g}),\]
and the result follows from Lemma~\ref{L:prox}\ref{L:proxii}.  
%%%%%
\ref{L:optimality-conditionsii}: Problem \eqref{P:estimador-l1-1} is equivalent to \eqref{P:main-definition} and, applying Lagrangian duality, the 
dual is \begin{equation*}
\max_{u\in\RR^p}\min_{(g,b,s)\in\RR^d\times\RR^n\times\RR^n}\sigma\|s\|_1+\frac12\|b\|_2^2+u^{\top}(y-Xg-b-s)
\end{equation*}
or, equivalently,
\begin{equation}
\label{e:aux23}
\max_{u\in\RR^p}\Big(u^{\top}y+\big(\min_{b\in\RR^n}\frac12\|b\|_2^2-u^{\top}b\big)
+\big(\min_{s\in\RR^n}\sigma\|s\|_1-u^{\top}s\big)-\max_{g\in\RR^p}g^{\top}(X^{\top}u)\Big).
\end{equation}
The optimality conditions associated to the convex optimization problem
\begin{equation*}
\min_{b\in\RR^n}\frac12\|b\|_2^2-u^{\top}b
\end{equation*}
 yields $b=u$, hence $\min_{b\in\RR^n}\frac12\|b\|_2^2-u^{\top}b=-\frac12\|u\|_2^2$. The second minimization problem can be written as
 \begin{equation*}
 \min_{s\in\RR^n}\sigma\|s\|_1-u^{\top}s=\sum_{i=1}^n \min_{s_i\in\RR}\sigma|s_i|-u_is_i
 =\begin{cases}
 -\infty,&\text{if }\|u\|_{\infty}>\sigma;\\
 0,&\text{if }\|u\|_{\infty}\leq\sigma.
 \end{cases}
 \end{equation*} 
 Finally, we have
 \begin{equation*}
 \max_{g\in\RR^p}g^{\top}(X^{\top}u)=\begin{cases}
  \pinf,&\text{if }u\notin\ker X^{\top};\\
  0,&\text{if }u\in\ker X^{\top}.
  \end{cases}
 \end{equation*}
 Altogether, it follows from \eqref{e:aux23} that the dual to \eqref{P:estimador-l1-1} is given by \eqref{e:dual} and the absence of duality gap 
 follows from the Slater qualification condition and the existence of multipliers \cite[section~4]{HU-Lemarechal93(I)}. 
\end{proof}

%%%%%%%%%%%%%%%%%%%%%%%%%%%%%%%%%%%%%%%%%%%%%%%%%%%%%%%%%%%%%%%%%%%%%%%%%%%%%%%%%%%%%%%
%-------------------------- Equivalence with LSE ------------------------------------ %
%%%%%%%%%%%%%%%%%%%%%%%%%%%%%%%%%%%%%%%%%%%%%%%%%%%%%%%%%%%%%%%%%%%%%%%%%%%%%%%%%%%%%%%

Let us show that, in the absence of sparse errors ($e=0$), the solution to Problem~\ref{prob:main} actually coincides with the LSE.
\begin{proposition}
\label{prop:lsrec}
Let $y=Xf+z$ and consider the unique decomposition of $z$ as $z=X\overline{g}+\overline{b}$, where
$\overline{g}\in\RR^p$ and $\overline{b}\in Ker X^\top$. 
If $\|\overline{b}\|_{\infty}<\sigma$, then
$(f_{n},\overline{b})$ is the unique solution to \eqref{P:estimador-l1-1}.
 \end{proposition}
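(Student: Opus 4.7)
The plan is to combine two steps: verify that $(f_n,\bar b)$ satisfies the optimality conditions of Lemma~\ref{L:optimality-conditions}\ref{L:optimality-conditionsi}, and then extract uniqueness from (a) strict convexity in $b$ and (b) the strict inequality $\|\bar b\|_\infty<\sigma$.

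First I would check feasibility of the optimality conditions at $(f_n,\bar b)$. Since $y=Xf+z=X(f+\bar g)+\bar b=Xf_n+\bar b$, the residual satisfies $y_i-x_i^\top f_n=\bar b_i$ for every $i$, and by hypothesis $|\bar b_i|\le\|\bar b\|_\infty<\sigma$, i.e., $\bar b_i\in(-\sigma,\sigma)$. The middle branch of \eqref{E:b_at_optimum} then gives exactly $\hat b_i=y_i-x_i^\top f_n=\bar b_i$, so the proximal characterization is satisfied. The condition $X^\top\bar b=0$ is immediate from $\bar b\in\ker X^\top$. Hence $(f_n,\bar b)$ is a minimizer of~\eqref{P:estimador-l1-1}.

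For uniqueness, I would first argue that the $b$-component is unique. The term $\tfrac12\|b\|_2^2$ is strictly convex in $b$, while $\sigma\|y-Xg-b\|_1$ is convex in $(g,b)$; taking convex combinations of two minimizers with distinct $b$-components would strictly decrease the quadratic term and not increase the $\ell_1$ term, contradicting optimality. Thus any solution $(\hat g,\hat b)$ has $\hat b=\bar b$. To pin down $\hat g$, I apply \eqref{E:b_at_optimum} once more with $\hat b=\bar b$: for each index $i$ with $\bar b_i\in(-\sigma,\sigma)$, the only branch consistent with $\hat b_i=\bar b_i$ is the middle one, which forces $y_i-x_i^\top\hat g=\bar b_i=y_i-x_i^\top f_n$, i.e. $x_i^\top(\hat g-f_n)=0$. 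The crucial use of the strict inequality $\|\bar b\|_\infty<\sigma$ is that this conclusion holds for every $i\in N$, yielding $X(\hat g-f_n)=0$, and the full-rank hypothesis on $X$ gives $\hat g=f_n$.

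The only delicate step is this last argument for uniqueness of $\hat g$: if one had only $\|\bar b\|_\infty\le\sigma$ the equality $\hat b_i=\pm\sigma$ would be compatible with any $y_i-x_i^\top\hat g$ of the appropriate sign and magnitude at least $\sigma$, and one could not force $x_i^\top(\hat g-f_n)=0$ on those indices. The strict inequality in the hypothesis is precisely what activates the middle (identity) branch of the proximal mapping at every coordinate and thereby identifies $\hat g$ via the full rank of $X$.
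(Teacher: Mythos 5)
Your proof is correct. The existence half coincides with the paper's: both verify that $(f_n,\overline{b})$ satisfies the optimality conditions of Lemma~\ref{L:optimality-conditions}\ref{L:optimality-conditionsi}, using $y-Xf_n-\overline{b}=0$ together with $\|\overline{b}\|_\infty<\sigma$ and $X^\top\overline{b}=0$. The uniqueness half, however, follows a genuinely different route. The paper restricts attention to candidates $(g,b)$ with $g$ in a neighbourhood $V$ of $f_n$ on which $\|y-Xg\|_\infty<\sigma$, computes $\psi(g,b)-\psi(f_n,\overline{b})=\tfrac12\|X(f_n-g)\|_2^2$ for such candidates (the cross term vanishes because $X^\top\overline{b}=0$), and then appeals to convexity of $\psi$ to pass from this local statement to global uniqueness. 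You instead first pin down the $b$-component globally via strict convexity of $b\mapsto\tfrac12\|b\|_2^2$ --- two minimizers with distinct $b$-components would be strictly beaten at their midpoint --- so every solution has $\hat b=\overline{b}$; then the characterization \eqref{E:b_at_optimum}, combined with $|\overline{b}_i|<\sigma$ at every coordinate, rules out the two saturated branches and forces $x_i^\top(\hat g-f_n)=0$ for all $i$, whence $\hat g=f_n$ by full rank. Your argument is global from the start, avoids the paper's somewhat terse local-to-global step, and makes explicit exactly where the strict inequality $\|\overline{b}\|_\infty<\sigma$ is needed; the paper's version reaches the same conclusion with a direct computation of the objective gap. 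Both arguments are valid.
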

\begin{proof}
Let us first prove that $(f_n,\bar{b})$ is a solution.
By definition $X^\top \overline{b}=0$, therefore it is enough to prove that 
$$
 \overline{b} \in \partial \sigma \|\cdot\|_1(y-X f_{n}- \overline{b}).$$
 Since $y=X(f+\bar{g})+\bar{b}=Xf_n+\bar{b}$, then $y-X f_{n}-\overline{b}=0$
and  $\sigma \partial \|\cdot\|_1(y-Xf_{n}-\overline{b})= [-\sigma,\sigma]^n.$
By the hypothesis on $\overline{b}$  we conclude that  
$\overline{b} \in \partial\sigma \|\cdot\|_1(y-Xf_{n}-\overline{b})$. 
Now let us prove that $(f_n,\bar{b})$ is the unique solution. 
Let $\varphi:\RR^p\to\RR$ be the continuous function defined by $\varphi(g)=\|y-Xg\|_\infty$.
By hypothesis, $\varphi(f_n)=\|\bar{b}\|_\infty<\sigma$ and the continuity of $\varphi$ yields the existence of a neighbourhood $V$ of $f_n$ such that $\varphi(g)<\sigma$ for every $g\in V$. 
Now let $(g,b)$ be a pair in $V \times\RR^n$ satisfying \eqref{E:b_at_optimum}. Then, since \eqref{E:b_at_optimum} yields $y=Xg+b=Xf_n+\bar{b}$, it follows from \eqref{P:estimador-l1-1}  that
 $$
 \begin{aligned}
\psi(g,b)-\psi(f_n,\bar{b})&=\sigma(\|y-Xg-b\|_1-\|y-Xf_n-\bar{b}\|_1)+\frac12\|b\|_2^2-\frac12\|\bar{b}\|_2^2\\
&=\frac12 \|y-Xg\|_2^2-\frac{1}{2} \|\bar b\|_2^2 \\
&= \frac12 \|X(f_n-g)\|_2^2+\bar{b}^\top X (f_n-g)\\
&= \frac12 \|X(f_n-g)\|_2^2\geq0.
\end{aligned}
$$
Since this inequality is valid for any candidate to solution close enough to $(f_n,\bar{b})$, the 
uniqueness follows from the convexity of $\psi$ and the full rank of $X$.
\end{proof}
%%%%%%%%%%%%%%%%%%%%%%%%%%%%%%%%%%%%%%%%%%%%%%%%%%%%%%%%%%%%%%%%%%%%%%%%%%%%%%%

Proposition~\ref{prop:lsrec} provides an interpretation of $\sigma$ as a threshold of the significance of outliers. 
Indeed, the part of the residuals that is below $\sigma$ is considered as noise, and the rest as outlier. 
If most of the outliers are comparable to $\sigma$ in magnitude, they can be perceived as noise, and 
the $\ell_1\square\ell_2$ estimator is close to the LSE. Moreover, as $\sigma$ goes to  $0$, our estimator tends to $f_1$.

%%%%%%%%%%%%%%%%%%%%%%%%%%%%%%%%%%%%%%%%%%%%%%%%%%%%%%%%%%%%%%%%%%%%%%%%%%%%%%%%%%%%%%%
%----------------------     MAIN THEOREM     -----------------------------------------%
%%%%%%%%%%%%%%%%%%%%%%%%%%%%%%%%%%%%%%%%%%%%%%%%%%%%%%%%%%%%%%%%%%%%%%%%%%%%%%%%%%%%%%%

We pursue the study of our estimator by showing that the additional term $b$, which makes the difference between 
our estimator and the $\ell_1$ estimator, improves its error bounds.
The numerical simulations performed in Section~\ref{S:numerical} confirm that the additional term actually plays 
an important role reducing the bias induced by noise. 
\begin{theorem}\label{Th:main}
Let $y=Xf+z+e$, let $M=\supp(e)$, and suppose that $|M|=k\leq m(X)$. Consider the unique
decomposition of $z$ as $z=X\overline{g}+\overline{b}$, where
$\overline{g}\in\RR^p$ and $\overline{b}\in Ker X^\top$. Then any solution  $(\hat g,\hat b)$ to
\eqref{P:estimador-l1-1} satisfies 
\begin{equation*}%\label{E:main-estimate}
\|X(\hat{g} -f_n)\|_1 \leq \frac{1}{2c_k-1} \left( \|\bar{b}-\hat{b}\|_{1,N\setminus M} + 
\frac{\|\bar{b}-\hat{b}\|_{2,N\setminus M}^2}{\|\bar{b}-\hat{b}\|_{\infty,N\setminus M}}\right),
\end{equation*}
where $f_n=f+\bar{f}$ is the LSE on $y_n=Xf+z$. 
\end{theorem}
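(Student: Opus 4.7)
The plan is to mimic the proof of Theorem~\ref{Th:estimate-pure-l1}, but now applied to the residual vector $y - X\hat{g} - \hat{b}$ instead of $y - Xf_1$. The additional variable $\hat{b}$ does not cause trouble because of two facts we can extract from Lemma~\ref{L:optimality-conditions}: first, $X^{\top}\hat{b} = 0$, so $\bar{b} - \hat{b} \in \ker X^{\top}$; second, for a fixed $\hat{b}$ the quadratic term in $\psi$ is constant, so joint optimality of $(\hat{g}, \hat{b})$ forces $\hat{g} \in \argmin_{g}\|y - Xg - \hat{b}\|_1$. This reduces the analysis to an $\ell_1$ comparison, with $\hat{b}$ acting as a benign shift of the data.

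First, I would apply Lemma~\ref{L:desigualdad-L1}\ref{L:L1i} with $b^{*} = \hat{b}$, $g = \hat{g}$, $g^{*} = f_n$ and $|M| = k \le m(X)$ to get
\begin{equation*}
\|y - X\hat{g} - \hat{b}\|_1 - \|y - Xf_n - \hat{b}\|_1 \ge (2c_k - 1)\|X(\hat{g} - f_n)\|_1 - 2\!\!\sum_{i\in N\setminus M}\!\!|y_i - x_i^{\top}f_n - \hat{b}_i|.
\end{equation*}
For $i \in N\setminus M$ we have $e_i = 0$, hence $y_i = x_i^{\top}f_n + \bar{b}_i$ and the last sum collapses to $\|\bar{b} - \hat{b}\|_{1, N\setminus M}$.

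Next, I would bound the left-hand side from above via LP duality, exactly as in Theorem~\ref{Th:estimate-pure-l1}. Because $\hat{g}$ minimizes $\|y - Xg - \hat{b}\|_1$,
\begin{equation*}
\|y - X\hat{g} - \hat{b}\|_1 = \max_{d\in P^{*}} d^{\top}(y - \hat{b}),\quad P^{*} = \{d\in\ker X^{\top} : \|d\|_{\infty} \le 1\}.
\end{equation*}
Since $d\in\ker X^{\top}$, $d^{\top}X(f + \bar{g}) = 0$, so $d^{\top}(y - \hat{b}) = d^{\top}(\bar{b} - \hat{b} + e)$; also $y - Xf_n - \hat{b} = \bar{b} - \hat{b} + e$. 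Using that $\bar{b} - \hat{b}\in \ker X^{\top}$ (this is where $X^{\top}\hat{b}=0$ is essential) and applying Lemma~\ref{L:tecnico} to the vector $\bar{b} - \hat{b}$ and outlier support $M$, I obtain
\begin{equation*}
\max_{d\in P^{*}} d^{\top}(\bar{b}-\hat{b}+e) - \|\bar{b}-\hat{b}+e\|_1 \le -\|\bar{b}-\hat{b}\|_{1,N\setminus M} + \frac{\|\bar{b}-\hat{b}\|_{2,N\setminus M}^{2}}{\|\bar{b}-\hat{b}\|_{\infty,N\setminus M}},
\end{equation*}
after splitting $\|\bar{b}-\hat{b}+e\|_1$ across $M$ and $N\setminus M$ and noting $e$ vanishes off $M$.

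Chaining the two inequalities, the $-2\|\bar{b}-\hat{b}\|_{1,N\setminus M}$ term combines with the $-\|\bar{b}-\hat{b}\|_{1,N\setminus M}$ term to leave $+\|\bar{b}-\hat{b}\|_{1,N\setminus M}$ on the right, and dividing by $2c_k - 1 > 0$ (since $k \le m(X)$) yields the claimed bound. The degenerate case $\|\bar{b}-\hat{b}\|_{\infty,N\setminus M} = 0$ is handled exactly as in Theorem~\ref{Th:estimate-pure-l1}: the inequality before invoking duality already forces $X(\hat{g} - f_n) = 0$, hence $\hat{g} = f_n$ by the full-rank hypothesis. The only delicate step is the simultaneous use of $X^{\top}\hat{b} = 0$ and the restricted optimality of $\hat{g}$ given $\hat{b}$; once these are recorded, the rest is bookkeeping parallel to the pure $\ell_1$ case.
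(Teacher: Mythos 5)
Your proof is correct and follows the same skeleton as the paper's --- Lemma~\ref{L:desigualdad-L1}\ref{L:L1i} with $b^*=\hat b$, $g^*=f_n$, then a duality bound, then Lemma~\ref{L:tecnico} applied to $\bar b-\hat b$ --- but the middle step is handled by a genuinely different device. The paper invokes the Lagrangian dual of the full problem (Lemma~\ref{L:optimality-conditions}\ref{L:optimality-conditionsii}), writes $\psi(\hat g,\hat b)=\max_{u\in\sigma P^*}u^\top(e+\bar b)-\frac12\|u\|_2^2$, and completes the square to absorb the $-\frac12\|\hat b\|_2^2$ term, arriving at $\psi(\hat g,\hat b)-\frac12\|\hat b\|_2^2\le\max_{u\in\sigma P^*}u^\top(e+\bar b-\hat b)$. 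You instead observe that joint optimality forces $\hat g\in\Argmin{g}{\|y-Xg-\hat b\|_1}$, so the quadratic terms cancel outright and the plain LP duality already used in Theorem~\ref{Th:estimate-pure-l1} applies to the shifted data $y-\hat b$. Both routes land on exactly the same quantity $\max_{d\in P^*}d^\top(e+\bar b-\hat b)$ and finish identically via Lemma~\ref{L:tecnico}; yours is slightly more elementary (no Lagrangian dual, no square completion), while the paper's reuses the dual problem \eqref{e:dual} it has already established. Two small remarks. First, your parenthetical that $X^\top\hat b=0$ is ``essential'' is a misattribution: the identity $d^\top(y-\hat b)=d^\top(e+\bar b-\hat b)$ only needs $d\in\ker X^\top$, and Lemma~\ref{L:tecnico} imposes no kernel condition on its argument $b$, so the optimality condition $X^\top\hat b=0$ is in fact never used in your argument. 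Second, your explicit handling of the degenerate case $\|\bar b-\hat b\|_{\infty,N\setminus M}=0$ is a welcome addition, since the stated bound is vacuous there and the paper's proof is silent on it.
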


\begin{proof}
 From Lemma~\ref{L:desigualdad-L1}\ref{L:L1i} and \eqref{P:estimador-l1-1} we deduce
\begin{equation*}%\label{E:first-estimate2}
\psi(\hat g,\hat b) -\psi(f_{{n}},\hat b)\geq \sigma (2c_k-1) \|X (\hat g-f_{{n}})\|_1-2\sigma 
\|y-Xf_{n}-\hat b\|_{1,N\setminus M}.
\end{equation*}
Hence, it follows from $f_n=f+\bar{g}$ that, for every $i\in\{1,\ldots,n\}$, $y_i-x_i^{\top}f_n=e_i+\bar{b}_i$ and, thus, 
$\psi(f_{{n}},\hat b)=\sigma\|e+\bar{b}-\hat{b}\|_1+\|\hat{b}\|_2^2/2$. Therefore, since $e_i=0$ for any $i\in N\setminus M$,  
\begin{equation}
\label{e:auxibound}
\sigma (2c_k-1) \|X (\hat g-f_{{n}})\|_1 \leq 2\sigma 
\|\bar{b}-\hat b\|_{1,N\setminus M}-\sigma \|e+\bar{b}-\hat b \|_1+\psi(\hat g,\hat b) 
-\frac12 \|\hat{b}\|_2^2.
\end{equation}
From Lemma~\ref{L:optimality-conditions}\ref{L:optimality-conditionsii}, the dual problem to \eqref{P:estimador-l1-1} is
\begin{equation*}
\max_{u\in \sigma P^*} u^\top (e+\bar{b})-\frac12 \|u\|_2^2
\end{equation*}
and $\psi(\hat g,\hat b)=\max\limits_{u\in \sigma P^*} u^\top (e+\bar{b})-\frac12 \|u\|_2^2$. 
Therefore
\begin{align*}
\psi(\hat g,\hat b) -\frac12 \|\hat{b}\|_2^2=&\max_{u\in \sigma P^*} u^\top (e+\bar{b})-\frac12 \|u\|_2^2 -\frac12 \|\hat{b}\|_2^2\\
=& \max_{u\in \sigma P^*} u^\top (e+\bar{b}-\hat{b}) -\frac12 \|u-\hat{b}\|_2^2\\
\leq & \max_{u\in \sigma P^*} u^\top (e+\bar{b}-\hat{b}).
\end{align*}
Hence, it follows from Lemma \ref{L:tecnico} that
$$
\psi(\hat g,\hat b) -\frac12 \|\hat{b}\|_2^2\leq  
\sigma\|e+\bar{b}-\hat{b}\|_{1,M}+
\frac{\sigma}{\|\bar{b}-\hat{b}\|_{\infty,N\setminus M}}\|\bar{b}-\hat{b}\|_{2,N\setminus M}^2,
$$
which, combined with \eqref{e:auxibound}, yields 
\begin{align*}
(2c_k-1) \|X (\hat g-f_{{n}})\|_1 &\leq 2 \|\bar{b}-\hat b\|_{1,N\setminus M}- \|e+\bar{b}-\hat b \|_1+
\|e+\bar{b}-\hat{b}\|_{1,M}\nonumber\\
&\hspace{3cm}+
\frac{1}{\|\bar{b}-\hat{b}\|_{\infty,N\setminus M}}\|\bar{b}-\hat{b}\|_{2,N\setminus M}^2\\
&= \|\bar{b}-\hat b\|_{1,N\setminus M}+
\frac{1}{\|\bar{b}-\hat{b}\|_{\infty,N\setminus M}}\|\bar{b}-\hat{b}\|_{2,N\setminus M}^2
\end{align*}
as claimed.
\end{proof}
%%%%%%%%%%%%%%%%%%%%%%%%%%%%%%%%%%%%%%%%%%%%%%%%%%%%%%%%%%%%%%%%%%%%%%%%%%%%%%%%%%%%%%%%%%%%%%%%%%%

Note that the bound in Theorem~\ref{Th:main}  depends only on the data of the problem and, in particular, it does not depend explicitly on $\sigma$. 
The only dependency is through $\hat{b}$, which is bounded by $\sigma$. 

%%%%%%%%%%%%%%%%%%%%%%%%%%%%%%%%%%%%%%%%%%%%%%%%%%%%%%%%%%%%%%%%%%%%%%%%%%%%%%%%%
%%%%%%%%%%%%%%%%%%%%%% Corollary RBP %%%%%%%%%%%%%%%%%%%%%%%%%%%%%%%%%%%%%%%%%%%%
%%%%%%%%%%%%%%%%%%%%%%%%%%%%%%%%%%%%%%%%%%%%%%%%%%%%%%%%%%%%%%%%%%%%%%%%%%%%%%%%%%

The following result gives a connection between the RBP of the $\ell_1\square\ell_2$ estimator and that of the $\ell_1$ estimator.
We recall that the regression breakdown point of an estimator is the maximum number of components 
of the data $y$ that may diverge while keeping the estimator bounded.
\begin{corollary}
The RBP of the $\ell_1\square\ell_2$ estimator is at least $m(X)$. 
\end{corollary}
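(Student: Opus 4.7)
The plan is to read the conclusion off Theorem~\ref{Th:main}, observing that the error bound it provides is uniform with respect to the outlier vector. Fix $k\leq m(X)$ and consider any sequence of data $y^{(j)}=Xf+z+e^{(j)}$ with $\supp(e^{(j)})\subseteq M^{(j)}$, $|M^{(j)}|\leq k$, and with the nonzero entries of $e^{(j)}$ allowed to diverge in magnitude. Let $(\hat g^{(j)},\hat b^{(j)})$ be a corresponding solution to \eqref{P:estimador-l1-1}. Theorem~\ref{Th:main} yields
\[
\|X(\hat g^{(j)}-f_n)\|_1\leq \frac{1}{2c_k-1}\left(\|\bar b-\hat b^{(j)}\|_{1,N\setminus M^{(j)}}+\frac{\|\bar b-\hat b^{(j)}\|_{2,N\setminus M^{(j)}}^2}{\|\bar b-\hat b^{(j)}\|_{\infty,N\setminus M^{(j)}}}\right).
\]

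Next I would use the H\"older-type simplification of Remark~\ref{R:mejor} (applied to the vector $\bar b-\hat b^{(j)}$) to bound the right-hand side by $\frac{2}{2c_k-1}\|\bar b-\hat b^{(j)}\|_1\leq \frac{2}{2c_k-1}(\|\bar b\|_1+\|\hat b^{(j)}\|_1)$. The decomposition $z=X\bar g+\bar b$ depends only on the non-contaminated noise, so $\|\bar b\|_1$ is a fixed constant. By Lemma~\ref{L:optimality-conditions}\ref{L:optimality-conditionsi}, $\|\hat b^{(j)}\|_\infty\leq\sigma$, and hence $\|\hat b^{(j)}\|_1\leq n\sigma$. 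Therefore
\[
\|X(\hat g^{(j)}-f_n)\|_1\leq \frac{2}{2c_k-1}\bigl(\|\bar b\|_1+n\sigma\bigr),
\]
an upper bound that is independent of $e^{(j)}$. Since $X$ has full rank, $g\mapsto\|Xg\|_1$ is a norm on $\RR^p$ equivalent to the Euclidean one, so $(\hat g^{(j)})_j$ remains in a bounded subset of $\RR^p$. Taking the supremum over all admissible outlier configurations with at most $m(X)$ active coordinates gives the claim.

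The only delicate point, and in my view the main (minor) obstacle, is the degenerate case in which $\bar b$ and $\hat b^{(j)}$ coincide on $N\setminus M^{(j)}$, so that the denominator in Theorem~\ref{Th:main} vanishes. In this situation the numerator $\|\bar b-\hat b^{(j)}\|_{2,N\setminus M^{(j)}}^2$ is also zero, and the inequality from Theorem~\ref{Th:main} should be read (as in Theorem~\ref{Th:estimate-pure-l1}) as $X(\hat g^{(j)}-f_n)=0$, which is entirely compatible with $\hat g^{(j)}$ being bounded. Thus the argument is unaffected, and the RBP of the $\ell_1\square\ell_2$ estimator is at least $m(X)$.
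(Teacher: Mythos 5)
Your proof is correct and follows essentially the same route as the paper: apply Theorem~\ref{Th:main}, simplify via the H\"older bound of Remark~\ref{R:mejor}, and control $\|\hat b\|_1\leq n\sigma$ using Lemma~\ref{L:optimality-conditions}\ref{L:optimality-conditionsi}, arriving at the identical bound $\frac{\|\bar b\|_1+n\sigma}{c_k-1/2}$. Your explicit treatment of the degenerate case where the denominator vanishes is a welcome extra detail that the paper leaves implicit.
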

\begin{proof}
Let us prove that any minimizer $\hat g$ of the function 
 $\psi$ defined in \eqref{P:estimador-l1-1} is bounded, no matter how large the sparse term $e$ is.
 From Theorem~\ref{Th:main} and Lemma~\ref{L:optimality-conditions}\ref{L:optimality-conditionsi}, if $k=\|e\|_0\leq m(X)$ we obtain the estimate
 $$
 \|X(\hat g -f_{n})\|_1 \leq \frac{n\sigma+\|\bar{b}\|_1}{ c_k-1/2}<\pinf,
 $$
and the result follows.
\end{proof} 

\section{Algorithm}\label{S:algorithm}

In this section we propose and study an algorithm  for computing the estimator introduced in the previous section, 
which is an application of the forward-backward splitting method \cite{Opti04,Smms05}. 
Note that problem \eqref{P:estimador-l1-2} can be written equivalently as 
 \begin{equation}\label{P:estimador-l1-21}
\min_{s\in \RR^n}\Big(\sigma\|s\|_{1}+ \min_{g\in\RR^p} \frac{1}{2}\|y-Xg-s\|^2_{2}\Big).
\end{equation}
The first-order  optimality condition of the inner problem in $g$ yields $X^{\top}(Xg+s-y)=0$ or 
equivalently 
\begin{equation}
\label{e:calculog}
g=(X^ {\top} X)^{-1}X^ {\top} (y-s).
\end{equation}
Hence, from \eqref{P:estimador-l1-21} we obtain
\begin{equation}\label{P:estimador-l1-3}
 \min_{s\in \RR^n}\sigma\|s\|_{1}+\frac{1}{2}\|(I-H)(y-s)\|^2_{2},
\end{equation}
where $H=X(X^ {\top} X)^{-1}X^ {\top} $ is the hat matrix. Moreover, since the objective function in \eqref{P:estimador-l1-3} is the sum of a 
general convex function and a differentiable convex function with Lipschitz gradient,
the solutions of \eqref{P:estimador-l1-3} are characterized
\cite[Proposition 3.1]{Smms05} 
by
\begin{equation}\label{E:characterization_of_s}
(\forall \gamma>0)\quad
s=P_{\gamma\sigma}(s-\gamma(I-H)(y-s)), 
\end{equation}
where $P_{\gamma\sigma}:=\prox_{\gamma\sigma\|\cdot\|_1}$
is the  proximal operator asociated to the function 
$\gamma\sigma \|\cdot\|_1,$  defined 
in \eqref{E:def-prox}. By using Lemma \ref{L:prox} we obtain that for every $\gamma>0$, 
\begin{align}\label{E:prox-l1}
P_{\gamma}\colon\RR^n&\to\RR^n\nonumber\\
(\xi_1,\ldots,\xi_n)&\mapsto(\sign(\xi_1)\max\{\gamma-|\xi_1|,0\},
\ldots,\sign(\xi_n)\max\{\gamma-|\xi_n|,0\}).
\end{align}
Combining the fixed-point characterization \eqref{E:characterization_of_s} with the expression for the 
proximal mapping \eqref{E:prox-l1} and adding relaxation steps $(\lambda_k)_{k\in\NN}$, we
obtain Algorithm~\ref{A:fb}.
\begin{algorithm}[h!]
\begin{tabular}{l@{\extracolsep{.5cm}}p{12cm}}
& Choose $s_0\in \RR^n$ and set $k := 0$. Iterate: \\
\midrule
1.& Choose $0<\lambda_k \leq 1$ and $0< \gamma_k<2/\|I-H\|$.\\
2.& Let
\begin{equation*}%\label{E:iteration}
s_{k+1}=s_k+\lambda_k\big(P_{\sigma\gamma_k} (s_k-\gamma_k(I-H)(s_k-y))-s_k\big).
\end{equation*} \\
3. & If a stopping criterion is satisfied, stop. Otherwise set $k := k + 1$ and go to step 1.\\
\end{tabular}
\caption{The forward-backward algorithm for solving \eqref{P:estimador-l1-3}.}\label{A:fb}
\end{algorithm}

\noindent The convergence properties of Algorithm \ref{A:fb} are stated in the following Theorem.

\begin{theorem}
\label{t:FBmms2}
Let $(\gamma_k)_{k\in\NN}$ be a sequence in $\RPP$ such that 
$0<\inf_{k\in\NN}\gamma_k\leq\sup_{k\in\NN} \gamma_k<2/\|I-H\| $ and let
$(\lambda_k)_{k\in\NN}$ be a sequence in $]0,1]$ such that $\inf_{k\in\NN}\lambda_k>0$.
Let $s_0\in\RR^n$ be arbitrary, and let $(s_{k})_{k\in\NN}$ be the sequence of iterates 
obtained from Algorithm~\ref{A:fb}. Then the following hold.
\begin{enumerate}
 \item $(s_k)_{k\in\NN}$ converges to a solution $\hat{s}$ to \eqref{P:estimador-l1-3}.
 \item $\sum_{k\in\NN}\|(I-H)(s_k-\hat{s})\|_2^2<\pinf$.
 \end{enumerate}
\end{theorem}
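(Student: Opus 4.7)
The plan is to recognize Problem \eqref{P:estimador-l1-3} as a composite minimization and apply the standard convergence theory for the forward–backward splitting method \cite{Smms05}. Setting
\[
f_1(s) = \sigma\|s\|_1, \qquad f_2(s) = \tfrac{1}{2}\|(I-H)(y-s)\|_2^2,
\]
the function $f_1$ is proper, convex, and lower semicontinuous, while $f_2$ is convex and differentiable. The key observation is that $I-H$ is the orthogonal projector onto $\ker X^{\top}$, so $(I-H)^{\top}=I-H$ and $(I-H)^2=I-H$. Consequently $\nabla f_2(s) = (I-H)(s-y)$ with Hessian $I-H$, which is Lipschitz with constant $\|I-H\|$. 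Existence of minimizers of $f_1 + f_2$ is inherited from the existence of solutions to \eqref{P:main-definition} through the reduction \eqref{e:calculog}.

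Under these identifications, and using Lemma~\ref{L:prox} to identify $\prox_{\gamma_k\sigma\|\cdot\|_1}$ with $P_{\sigma\gamma_k}$, Algorithm~\ref{A:fb} is exactly the relaxed forward–backward iteration
\[
s_{k+1} = s_k + \lambda_k\bigl(\prox_{\gamma_k f_1}(s_k - \gamma_k\nabla f_2(s_k)) - s_k\bigr).
\]
The hypotheses $0<\inf_k\gamma_k\le\sup_k\gamma_k<2/\|I-H\|$ and $\inf_k\lambda_k>0$ with $\lambda_k\in{]0,1]}$ are precisely the step-size and relaxation requirements of the Combettes–Wajs convergence theorem for forward–backward splitting (see \cite[Theorem 3.4 and Proposition 3.1]{Smms05}). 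Invoking this result gives assertion (i): $(s_k)_{k\in\NN}$ converges to some $\hat s$ that satisfies the fixed-point characterization \eqref{E:characterization_of_s}, and hence solves \eqref{P:estimador-l1-3}.

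For assertion (ii), the same convergence result also yields the summability bound $\sum_{k\in\NN}\|\nabla f_2(s_k)-\nabla f_2(\hat s)\|_2^2<\pinf$ (this is the standard by-product of the descent-lemma estimate combined with Fejér monotonicity of $(s_k)$ with respect to the solution set, when $\gamma_k$ is bounded away from $2/\|I-H\|$). Since
\[
\nabla f_2(s_k) - \nabla f_2(\hat s) = (I-H)(s_k-\hat s),
\]
this is exactly the desired inequality.

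The only subtle point — and where I would focus the writing — is verifying that the Lipschitz constant of $\nabla f_2$ is $\|I-H\|$ and not $\|I-H\|^2$; this requires explicitly using idempotency so that the Hessian is $I-H$ itself. Once this is made clear, the proof reduces to a direct application of the forward–backward machinery already cited in the paper, with no further technical work needed.
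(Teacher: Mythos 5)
Your proposal is correct and follows essentially the same route as the paper: identify \eqref{P:estimador-l1-3} as the sum of $\sigma\|\cdot\|_1$ and a smooth convex term whose gradient is $(I-H)(\cdot-y)$ by idempotency and self-adjointness of the projector $I-H$, check the Lipschitz constant $\|I-H\|$, recognize Algorithm~\ref{A:fb} as the relaxed forward--backward iteration via Lemma~\ref{L:prox}, and invoke \cite[Theorem~3.4]{Smms05} for both assertions. No gaps.
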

\begin{proof}
Note that $\|\cdot\|_1$ is a convex continuous function and $\frac{1}{2}\|(I-H)(y-\cdot)\|^2_2$ is a convex
differentiable function with gradient
\begin{equation}
\label{e:aux2}
\nabla\Big(\frac{1}{2}\|(I-H)(y-\cdot)\|^2_2\Big) =(I-H)(\cdot-y), 
\end{equation}
where the last equality follows from the projector property of $H$, $(I-H)^*(I-H)=(I-H)^2=(I-H)$. 
We deduce from \eqref{e:aux2} that
$\nabla(\frac{1}{2}\|(I-H)(y-\cdot)\|^2_2)=(I-H)(\cdot-y)$ is $L$-Lipschitz continuous with $L=\|I-H\|$.
Moreover it follows from \eqref{E:prox-l1} that, for every $\gamma\in\RPP$, 
$\prox_{\gamma\|\cdot\|_1}=P_{\gamma}$.
Altogether the results follow from \cite[Theorem~3.4]{Smms05}.
\end{proof}

Theorem~\ref{t:FBmms2} asserts that Algorithm~\ref{A:fb} approximates a solution $\hat{s}$ to \eqref{P:estimador-l1-3}. 
Since \eqref{P:estimador-l1-3} is equivalent to \eqref{P:main-definition}, 
it follows from \eqref{e:calculog} that the $\ell_1\square\ell_2$ estimator can be computed as
\begin{equation*}
\hat{g}=(X^ {\top} X)^{-1}X^ {\top} (y-\hat{s}).
\end{equation*} 
\section{Numerical Experiments}\label{S:numerical}
  
As announced in Section \ref{S:main}, numerical experiments confirm that the new estimator have lower bias when compared
 to $\ell_1$ or LSE estimation. In this section we describe the experimental setup and present numerical results.
  
The matrix $X$ is generated randomly with independent entries drawn from a standard normal distribution.
Its size is $n\times p=512\times 128$. The vector of data is generated according to 
$$ y=Xf+z+e,$$ with $f=0$ and $z$ standard normal, for different types and levels of contamination.
 
We estimate $f$ by three different methods: LSE, $\ell_1$, and $\ell_1\square\ell_2$, with $\sigma=\sqrt{\chi^2_1(.95)}$. The size of the support of 
$e$ ranges from $1$ to $(n-p-1)/2$, which means that the maximum fraction of contamination is close to $40\%$. We consider three types of sparse 
contamination. In the first and second types, each non-zero component of $e$ is drawn i.i.d. from a Normal (light-tailed) and Laplace (heavy-tailed) 
distribution with mean $0$ and standard deviation $5$, respectively. The last type of sparse error is considered to be very large and adversarial.
For generating the adversarial contamination we first create the vector $\tilde{e}=X\mathbbm{1}_p$, where $\mathbbm{1}_p$ is the vector of ones of size 
$p\times 1$. Then the sparse errors are obtained by selecting some components of $\tilde{e}$ randomly and by multiplying them by $50$.

For each type of contamination, for every $k\in\{1,\ldots,(n-p-1)/2\}$, we repeat $1000$ times the following:
\begin{enumerate} 
\item[1)] Choose randomly a subset $M$ of $N$ of size $k$.
\item[2)] Construct the sparse vector $e$ by filling the entries indexed by $M$ with the corresponding type of large errors.
\item[3)] Generate $z$ with independent $N(0,1)$ entries.
\item[4)] Set $y=z+e$ and estimate $f=0$ by LSE, $\ell_1$, and $\ell_1\square\ell_2$ methods.
\end{enumerate}

\noindent For computing the $\ell_1\square\ell_2$ estimator the algorithm described in Section~\ref{S:algorithm} is used. The code is 
available at \texttt{http://www.dim.uchile.cl/$\sim$sflores}. The $\ell_1$ estimator is computed by solving  
an equivalent linear program using the GNU solver \texttt{glpk}.\\
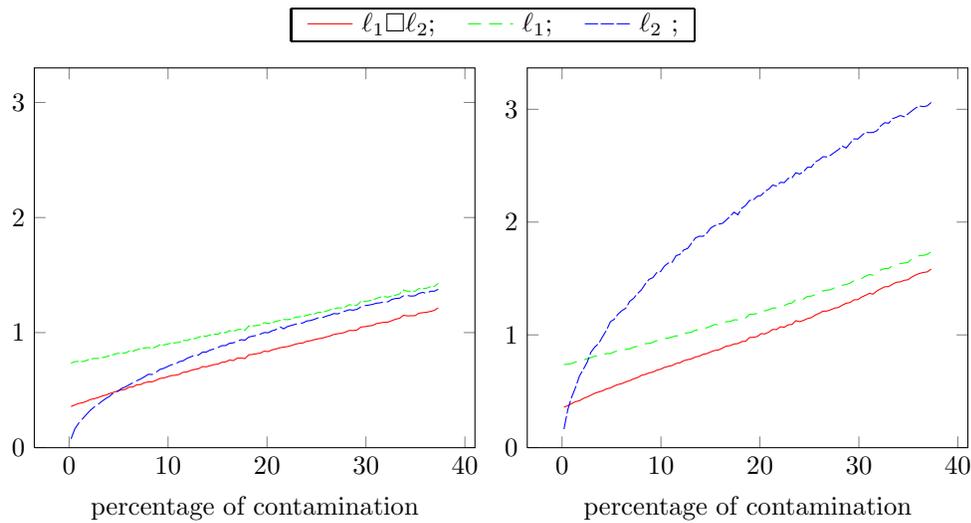
\begin{figure}[t]
\begin{center}
\begin{tabular}{|ccc|}
\hline
\ref{pgfplot:l1l2} $\ell_1\square\ell_2$;& \ref{pgfplot:l1} $\ell_1$;& \ref{pgfplot:l2} $\ell_2$ ;\\
\hline
\end{tabular}
\end{center}

\vspace*{.3cm}
\begin{tikzpicture}
\begin{axis}[
width=.45\textwidth,
%height=.3\textheight,
scale only axis,
 xlabel=percentage of contamination,
 ymin=0]%,stackplots=y,ymax=5]
\addplot[color=red,solid] file {dataP2.txt} ;%\addlegendentry{$\ell_1/\ell_2$};
\addplot[color=green,dash pattern=on 3pt off 1pt] file {dataP2l1.txt};%\addlegendentry{$\ell_1$};
\addplot[color=blue,dash pattern=on 7pt off 1.5pt] file {dataP2ls.txt} ;%\addlegendentry{$\ell_2$};
\addplot[color=white,solid,thin] coordinates{(5,3)};%\addlegendentry{$\ell_2$};
\end{axis}
\end{tikzpicture}
\begin{tikzpicture}
\begin{axis}[
width=.45\textwidth,
%height=.3\textheight,
scale only axis,
    xlabel=percentage of contamination,
 ymin=0]%,stackplots=y,ymax=5]
\addplot[color=red,solid] file {dataP1.txt} ; \label{pgfplot:l1l2} %\addlegendentry{$\ell_1/\ell_2$};
\addplot[color=green,dash pattern=on 5pt off 3pt] file {dataP1l1.txt};\label{pgfplot:l1}%\addlegendentry{$\ell_1$};
\addplot[color=blue,dash pattern=on 5pt off 1pt] file {dataP1ls.txt} ;\label{pgfplot:l2}%\addlegendentry{$\ell_2$};
\end{axis}
\end{tikzpicture}
 \caption{Relative error $\|\hat f-f_{{n}}\|/\|f_{{n}}\|$ for different percentage of outliers with gaussian noise. On the left, the contamination is drawn from a $N(0,5)$ distribution and on the right from a Laplace $(0,5)$ distribution.}\label{F:numerical1}
 \end{figure}
In Figure~\ref{F:numerical1} the bias for data with gaussian noise and sparse contamination is plotted. For each percentage of outliers 
the bias is quantified by the mean of the quotients  $\|\hat f-f_{{n}}\|_2/\|f_{{n}}\|_2$, where $\hat f$ is the estimation of $f$ 
obtained by each of the three methods and $f_{{n}}=(X^\top X)^{-1}X^\top z$.
In the figure on the left the bias is plotted for different levels of contamination with light-tailed outliers. We perceive that 
LSE outperforms $\ell_1\square\ell_2$ estimator when
the vector of outliers is very sparse (less than 5\% of contamination) and, hence, the gaussian noise predominates. However, the $\ell_1\square\ell_2$ estimator has a lower bias in general. Notice that the difference of the bias between LSE and $\ell_1$ estimator decreases as the percentage of contamination raises. In the figure on the right the bias is plotted  
for different levels of contamination with heavy-tailed outliers. In this case we observe the much better performance of the $\ell_1\square\ell_2$ estimator with respect to LSE even for very low levels of sparse contamination. Notice that, in this case, the $\ell_1$ estimator outperforms LSE for almost any percentage of contamination. In Figure~\ref{F:numerical2} we plot the bias under gaussian noise and very large adversarial sparse errors. On the left, we observe that the $\ell_1\square\ell_2$ estimator outperforms dramatically LSE for any level of contamination and, on the right, we focus on the low contamination zone for perceiving the better performance of the $\ell_1\square\ell_2$ estimator with respect to the $\ell_1$ estimator. In addition, we appreciate a clear breakdown phenomenon when the level of contamination exceeds the $30\%$ approximately.

In summary, we perceive the high sensitivity of LSE with respect to the percentage of outliers and, in special, with respect to heavy-tailed and 
adversarial ones. In every examined case we confirm the better performance of the $\ell_1\square\ell_2$ estimator with respect to the $\ell_1$ 
estimator, as expected in view of Theorem~\ref{Th:main}.

\begin{figure}[t]
\begin{center}
\begin{tabular}{|ccc|}
\hline
\ref{pgfplot2:l1l2} $\ell_1/\ell_2$;& \ref{pgfplot2:l1} $\ell_1$;& \ref{pgfplot2:l2} $\ell_2$ ;\\
\hline
\end{tabular}
\end{center}

\vspace*{.3cm}
\begin{tikzpicture}
\begin{axis}[
width=.45\textwidth,
%height=.3\textheight,
scale only axis,
    xlabel=percentage of contamination,
 ymin=0]%,stackplots=y,ymax=5]
\addplot[color=red,solid] file {dataP3.txt} ; \label{pgfplot2:l1l2} %\addlegendentry{$\ell_1/\ell_2$};
\addplot[color=green,dash pattern=on 5pt off 3pt] file {dataP3l1.txt};\label{pgfplot2:l1}%\addlegendentry{$\ell_1$};
\addplot[color=blue,dash pattern=on 5pt off 1pt] file {dataP3ls.txt} ;\label{pgfplot2:l2}%\addlegendentry{$\ell_2$};
\end{axis}
\end{tikzpicture}
% \begin{tikzpicture}
% \begin{axis}[
% width=.45\textwidth,
% %height=.3\textheight,
% scale only axis,
% xlabel=percentage of contamination,
% ymin=0]%,stackplots=y,ymax=5]
% \addplot[color=red,solid] file {dataP4.txt} ;%\addlegendentry{$\ell_1/\ell_2$};
% \addplot[color=green,dash pattern=on 3pt off 1pt] file {dataP4l1.txt};%\addlegendentry{$\ell_1$};
% \addplot[color=blue,dash pattern=on 7pt off 1.5pt] file {dataP4ls.txt} ;%\addlegendentry{$\ell_2$};
% \end{axis}
% \end{tikzpicture}
\begin{tikzpicture}
\begin{axis}[
width=.45\textwidth,
%height=.3\textheight,
scale only axis,
xlabel=percentage of contamination,
ymin=0]%,stackplots=y,ymax=5]
\addplot[color=red,solid] file {dataP3-short.txt} ;%\addlegendentry{$\ell_1/\ell_2$};
\addplot[color=green,dash pattern=on 3pt off 1pt] file {dataP3l1-short.txt};%\addlegendentry{$\ell_1$};
\end{axis}
\end{tikzpicture}
 \caption{Plot of relative error $\|\hat f-f_{{n}}\|/\|f_{{n}}\|$, with $z$ standard gaussian, for different fractions of
 gross errors; at left, with adversarial contamination in the order of $50$; at right a closeup comparing $\ell_1\square\ell_2$
 and $\ell_1$ on the zone of low contamination.}\label{F:numerical2}
 \end{figure}
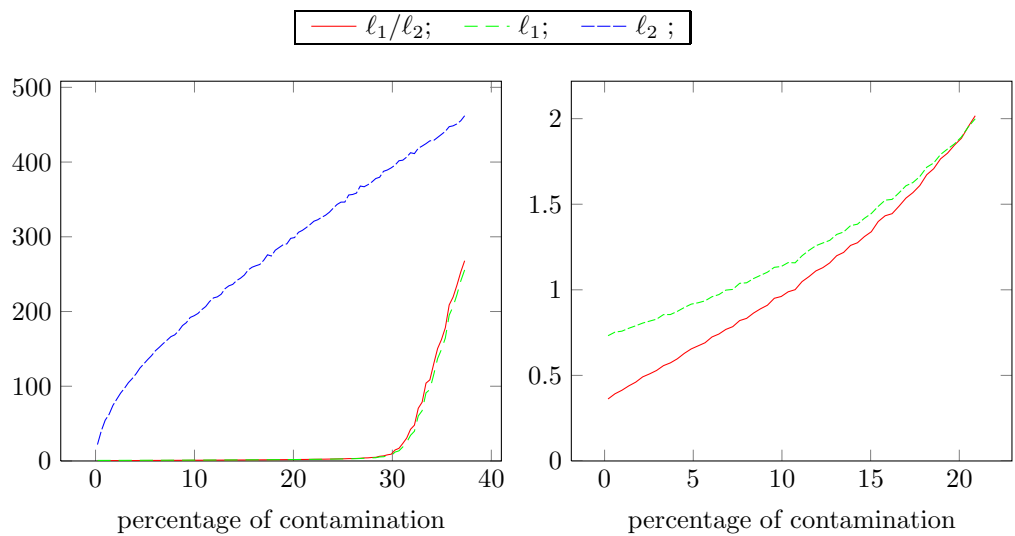

\section{Conclusions}\label{S:conclusions}

We have studied in deep the connections between robust regression and sparse reconstruction.
This link between apparently unrelated areas is of great interest for specialists as it permits 
to feed from each other of results, techniques as well as of new problems and questions.
The results presented in this article are quantitative, in contrast with the qualitative 
(bounded/unbounded) character of the results prevailing in robust statistics. 
We provide necessary and sufficient conditions, deterministic and for general data, in contrast 
with previous works based on restricted isometries. Our approach is simple and transparent, 
but powerful enough to treat the noisy case without modifications.

We have introduced a new estimator that combines the robustness of the $\ell_1$ estimator with 
the nice properties of the LSE. Numerical experiments show that the $\ell_1\square\ell_2$ estimator
behaves like the $\ell_1$ estimator concerning robustness to large outliers, but is less influenced 
by noise.

\section*{Appendix}
%%%%%%%%%%%%%%%%%%%%%%%%%%%%%%%%%%%%%%%%%%%%%%%%%%%%%%%%%%%%%%%%%%%%%%%%%%%%%
%%%%%%%%%%%%%%%%%%%%%%%%%%%%  Technical Lemma %%%%%%%%%%%%%%%%%%%%%%%%%%%%%%

\begin{lemma}
\label{L:tecnico}
Let $b\in \RR^n$,  $e\in\RR^n$ and let $M=\supp(e)$. Suppose that $|M|\leq m(X)$ and $\max\limits_{i\in N\setminus M}|b_i|>0$. Let us define
\begin{equation}
\label{e:pstar2}
P^*=\menge{d\in\ker X^{\top}}{\|d\|_{\infty}\leq 1}.
\end{equation}
Then, for every $\sigma>0$,
\begin{equation*}
\max_{d\in\sigma P^*}d^{\top}(e+b)\leq\sigma\|e+b\|_{1,M}+\frac{\sigma}{\|b\|_{\infty,N\setminus M}}\|b\|_{2,N\setminus M}^2.
\end{equation*}
\end{lemma}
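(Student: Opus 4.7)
My plan is to reduce to the case $\sigma=1$ using positive homogeneity (both sides are linear in $\sigma$, via the change of variables $d\mapsto d/\sigma$), and then to bound $d^\top(e+b)$ pointwise in $d\in P^*$ by splitting the index set into $M$ and $N\setminus M$ and treating each block separately. Since $e$ vanishes on $N\setminus M$,
\begin{equation*}
d^\top(e+b) = \sum_{i\in M} d_i(e_i+b_i) + \sum_{i\in N\setminus M} d_i b_i.
\end{equation*}
Taking the supremum over $d\in P^*$ afterwards will give the required statement.

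For the first sum, the constraint $\|d\|_\infty\leq 1$ directly yields, by H\"older's inequality,
\begin{equation*}
\sum_{i\in M} d_i(e_i+b_i)\leq \sum_{i\in M}|e_i+b_i|=\|e+b\|_{1,M},
\end{equation*}
which matches the first term on the right-hand side of the claimed inequality. For the second sum, setting $B:=\|b\|_{\infty,N\setminus M}>0$, I would combine a weighted Cauchy--Schwarz with the constraint $|d_i|\leq 1$. Writing
\begin{equation*}
\sum_{i\in N\setminus M} d_i b_i =\sum_{i\in N\setminus M}\bigl(d_i\sqrt{|b_i|}\bigr)\bigl(\operatorname{sign}(b_i)\sqrt{|b_i|}\bigr)
\end{equation*}
and distributing weights of the form $|b_i|/B$ so that $B$ appears in the denominator, the Cauchy--Schwarz step — after using $|d_i|\leq 1$ to eliminate the $d$-factors — should collapse to the target quantity $\|b\|_{2,N\setminus M}^2/B$. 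Summing the two contributions and taking the supremum over $d\in P^*$ then yields the claim.

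The main obstacle is precisely this second step: the pointwise bound $|d_i b_i|\leq b_i^2/B$ does not follow from $|d_i|\leq 1$ alone — it would require the strictly stronger $|d_i|\leq |b_i|/B$ — so a purely componentwise argument is not enough. My fallback would be to work globally through LP duality, using the identity $\max_{d\in \sigma P^*} d^\top(e+b)=\sigma\min_g\|e+b-Xg\|_1$ already exploited in the proof of Theorem~\ref{Th:estimate-pure-l1}, and then choose the dual variable $g$ so that $Xg$ absorbs the coordinates of $b$ on $N\setminus M$ where $|b_i|$ is close to $B$. The hypothesis $|M|\leq m(X)$ (i.e.\ $c_{|M|}>1/2$) enters through Lemma~\ref{L:desigualdad-L1} to control the resulting reconstruction residual, and a final Cauchy--Schwarz on that residual is what should produce the $\ell_2^2/\ell_\infty$ shape of the bound.
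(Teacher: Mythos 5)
Your handling of the $M$-block is correct and in fact more direct than the paper's: H\"older with $\|d\|_\infty\leq\sigma$ gives $\sum_{i\in M}d_i(e_i+b_i)\leq\sigma\|e+b\|_{1,M}$ immediately, whereas the paper detours through Lemma~\ref{L:desigualdad-L1} and LP duality to prove this as an \emph{equality} (which is the only place the hypothesis $|M|\leq m(X)$ enters, and it is not needed for the inequality direction). The genuine gap is the one you name yourself: you never establish $\sum_{i\in N\setminus M}d_ib_i\leq\sigma\|b\|_{2,N\setminus M}^2/\|b\|_{\infty,N\setminus M}$, and your diagnosis of why the componentwise route fails is exactly right --- $|d_i|\leq\sigma$ only yields $\sigma\|b\|_{1,N\setminus M}$, and the $\ell_2^2/\ell_\infty$ quantity would require the unavailable bound $|d_i|\leq\sigma|b_i|/\|b\|_{\infty,N\setminus M}$. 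Your LP-duality fallback remains a hope rather than an argument: with $g=0$ it reproduces the trivial $\ell_1$ bound, and no mechanism is given by which choosing $g$ to absorb coordinates of $b$ would produce the claimed shape.

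You should also know that the step you could not close is precisely where the paper's own proof breaks. In \eqref{E:max-colinear} the paper asserts $\max_{\|d\|_\infty\leq\sigma}d^{\top}\tilde{b}=\sigma\|\tilde{b}\|_2^2/\|\tilde{b}\|_\infty$, but that box-maximum equals $\sigma\|\tilde{b}\|_1$; the colinear choice $d=\sigma\tilde{b}/\|\tilde{b}\|_\infty$ only certifies a \emph{lower} bound, so the chain of inequalities is invalid. Indeed the statement itself can fail: take $n=3$, $X=(\eps,0,-1)^{\top}$, so that $\ker X^{\top}$ contains $(1,\eps,\eps)$ and $(0,1,0)$; with $e=0$ (hence $M=\emp$), $b=(1,\eps,\eps)$ and $d=(1,1,\eps)\in P^*$ one gets $d^{\top}b=1+\eps+\eps^2>1+2\eps^2=\|b\|_2^2/\|b\|_\infty$ for $0<\eps<1$, while all hypotheses of the lemma hold. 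So no proof of the inequality as written can succeed; what is safely available, and what yields the weakened forms of Theorems~\ref{Th:estimate-pure-l1} and~\ref{Th:main} indicated in Remark~\ref{R:mejor}, is the bound with $\sigma\|b\|_{1,N\setminus M}$ in place of $\sigma\|b\|_{2,N\setminus M}^2/\|b\|_{\infty,N\setminus M}$.
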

\begin{proof}
Let
\begin{equation}
\label{e:deftilda}
\tilde{b}_i=
\begin{cases}
0, &\text{if } i\in M;\\
b_i, &\text{otherwise},
\end{cases}
\quad \mbox{and} \quad 
\tilde{e}_i=
\begin{cases}
b_i+e_i, &\text{if } i\in M;\\
0, &\text{otherwise.}
\end{cases}
\end{equation}
Then $\supp(\tilde{e})=M$, $b+e=\tilde{b}+\tilde{e}$, $\|b+e\|_1=\|\tilde{b}\|_1+\|\tilde{e}\|_1$, and
\begin{equation}\label{E:maxa+b<maxa+maxb}
\max_{d\in\sigma P^*}d^{\top}(e+b)=\max_{d\in\sigma P^*}d^{\top}(\tilde{e}+\tilde{b})
\leq \max_{d\in\sigma P^*}d^{\top}\tilde{e}+\max_{d\in\sigma P^*}d^{\top}\tilde{b}.
\end{equation}
On one hand, it follows from Lemma~\ref{L:desigualdad-L1}\ref{L:L1i} with $y=\tilde{e}$, $g^*=0$, and $b^*=0$ that, for every 
$g\in\RR^p$, $\|\tilde{e}\|_1\leq\|\tilde{e}-Xg\|_1$, hence $0\in\argmin_{g\in\RR^p} \|\tilde{e}-Xg\|_1$ and from the first order optimality condition 
$0\in X^{\top}\partial\|\cdot\|_1(\tilde{e})$
or, equivalently,
\begin{equation*}
(\exists u\in P^*)\quad u^{\top}\tilde{e}=\|\tilde{e}\|_1.
\end{equation*}
Since, for every $u\in P^*$ , $u^{\top}e\leq \|e\|_1$ we hence deduce that
$\max_{u\in P^*}u^{\top}\tilde{e}=\|\tilde{e}\|_1$.
Therefore, by considering the change of variables $u=d/\sigma$, we obtain
\begin{equation}\label{E:dual-l1norm}
\max_{d\in\sigma P^*}d^{\top}\tilde{e}=\sigma\cdot\max_{u\in P^*}u^{\top}\tilde{e}=\sigma\|\tilde{e}\|_1.
\end{equation}
On the other hand, 
\begin{equation}\label{E:max-colinear}
\max_{d\in\sigma P^*}d^{\top}\tilde{b}\leq \max_{\|d\|_\infty\leq \sigma }d^{\top}\tilde{b}
=\frac{\sigma}{\|\tilde{b}\|_\infty}\tilde{b}^\top \tilde{b}=\frac{\sigma}{\|\tilde{b}\|_\infty}\|\tilde{b}\|_2^2.
\end{equation}
Therefore, by replacing \eqref{E:dual-l1norm} and \eqref{E:max-colinear} in \eqref{E:maxa+b<maxa+maxb}, the result follows from \eqref{e:deftilda}.
\end{proof}
%%%%%%%%%%%%%%%%%%%%%%%%%%%%%%%%%%%%%%%%%%%%%%%%%%%%%%%%%%%%%%%%%%%%%%%%

%%%%%%%%%%%%%%%%%%%%%%%%%%% REFERENCES %%%%%%%%%%%%%%%%%%%%%%%%%%%%%%%%%%%
\bibliographystyle{siam}
\bibliography{biblioeq}
%%%%%%%%%%%%%%%%%%%%%%%%%%%%%%%%%%%%%%%%%%%%%%%%%%%%%%%%%%%%%%%%%%%%%%%%%%
\end{document}